\newcommand{\txx}{\textcolor}
\definecolor{dg}{rgb}{0, 0.5, 0}
\definecolor{dp}{rgb}{0.50, 0, 0.40}
\newcommand{\lp}{\left(}
\newcommand{\rp}{\right)}
\newcommand{\lc}{\left[}
\newcommand{\rc}{\right]}
\newcommand{\si}{\sigma}
\newcommand{\be}{\mathbf{E}}
\newcommand{\1}{{\bf 1}}
\newcommand{\beq}{\begin{equation}}
\newcommand{\eeq}{\end{equation}}
\newcommand{\beqs}{\begin{equation}}
\newcommand{\eeqs}{\end{equation}}
\newcommand{\bea}{\begin{eqnarray}}
\newcommand{\eea}{\end{eqnarray}}
\newcommand{\beas}{\begin{eqnarray*}}
\newcommand{\eeas}{\end{eqnarray*}}
\def\cC{{\mathcal C}}
\def\md{{\mathbb D}}
\def\mp{{\mathbb  P}}
\newcommand{\cac}{{\mathcal C}}
\newcommand{\cf}{{\mathcal F}}
\newcommand{\ch}{{\mathcal H}}
\newcommand{\D}{{\mathbb D}}
\newcommand{\R}{{\mathbb R}}
\newtheorem{theorem}{Theorem}[section]
\newtheorem{definition}[theorem]{Definition}
\newtheorem{lemma}[theorem]{Lemma}
\newtheorem{proposition}[theorem]{Proposition}
\theoremstyle{remark}
\newtheorem{remark}[theorem]{Remark}
\theoremstyle{remark}
\newtheorem{example}[theorem]{Example}
\theoremstyle{remark}
\newtheorem{foo}[theorem]{Remarks}
\title[Density for the solution of SDE with reflection driven by a fBm]{Existence of density for the solution of stochastic delay differential equations with reflection driven by a fractional Brownian motion}
\author[M. Besal\'u, D. M\'arquez-Carreras, C. Rovira]{M. Besal\'u, D. M\'arquez-Carreras, C. Rovira}
\address{Mireia Besal\'u, Dep. Gen\`etica, Microbiologia i Estad\'istica, Facultat de Biologia, Universitat de Barcelona. Diagonal, 645, 08028 Barcelona}
\email{mbesalu@ub.edu}
\address{David  M\'arquez-Carreras, Facultat de Matem\`atiques i Inform\`atica, Universitat de Barcelona. Gran Via de les Corts Catalanes, 585, 08007 Barcelona}
\email{davidmarquez@ub.edu}
\address{Carles Rovira, Facultat de Matem\`atiques i Inform\`atica, Universitat de Barcelona. Gran Via de les Corts Catalanes, 585, 08007 Barcelona}
\email{carles.rovira@ub.edu}
\thanks{ D. M\'arquez-Carreras and C. Rovira are supported by the grant PID2021-123733NB-I00 from SEIDI,  Ministerio de Economia y Competividad.}
\date{\today}
\begin{document}

\begin{abstract}%
In this note we prove the existence of a density for the law of the
solution for 1-dimensional stochastic delay differential  equations with normal reflection. The equations are driven by a fractional Brownian motion with Hurst parameter $H > 1/2$. The stochastic integral with respect to the fractional Brownian motion is a pathwise Riemann-Stieltjes integral.

\end{abstract}

\maketitle

\renewcommand{\theequation}{1.\arabic{equation}}
\setcounter{equation}{0}

\section{Introduction}

There are some models affected by some type of noise where the dynamics are related to propagation delay and some of them are naturally non-negative quantities. For instance, applications as rated and prices in internet model, concentrations of ions or proportions of a population that are infected   (see the references in \cite{K-W}). It is then natural to consider stochastic differential equations with delay and non-negativity constraints. In our paper \cite{B-R}, we initiate their study when the noise is  a fractional Brownian motion, obtaining the existence and uniqueness of solution. 

\medskip

More precisely, consider a stochastic delay differential equation with positivity constraints of the form:
\begin{eqnarray}\label{eqprincipal}
X_t&=&\eta_0+\int_0^t b(X_s)ds+\int_0^t \si(X_{s-r})dW_s^H+Y_t,\quad t\in(0,T_0],\nonumber\\
X_t&=& \eta_t,\qquad t\in[-r,0]. \label{eq:prin-frac}
\end{eqnarray} 
Here $r$ denotes a strictly positive time delay; $W^H$ is a fractional Brownian motion with Hurst parameter $H>\frac{1}{2}$ defined in a complete probability space $(\Omega,\cf,\mp)$;  $\eta:[-r,0]\rightarrow\R_+$ is a non negative smooth function, with $\R_+=[0,+\infty)$ and $Y$ is a vector-valued non-decreasing process which ensures that the non-negativity constraints on X are enforced.

\medskip

\noindent
Set
\begin{equation}
Z_t= \eta_0+\int_0^t b(X_s)ds+\int_0^t \si(X_{s-r})dW_s^H,\qquad t\in[0,T_0]. \label{eq:F}
\end{equation}
It is known that we have an explicit formula for the regulator term $Y$ in terms of $Z$:
\begin{equation*}
Y_t=\max_{s\in[0,t]} \left(Z_s\right)^-,\qquad t\in[0,T_0],
\end{equation*}
where $x^-= \max(0,-x)$. Then, using the called Skorohod's  mapping, the solution of \eqref{eq:prin-frac} satisfies
\[X_t=\qquad\begin{cases}
Z_t+Y_t,\qquad t\in[0,T_0],\\
\eta_t,\qquad\quad t\in[-r,0].
\end{cases}\]

\medskip

As $H > \frac12$, the integral with respect to $W^H$ can be defined as a pathwise Riemann-Stieltjes integral using the results given by Young \cite{Y}. Moreover, Z\"alhe \cite{Z} introduced a
generalized Stieltjes integral using the techniques of fractional calculus. In particular, she
obtained a formula for the Riemann-Stieltjes integral using fractional derivatives (see (2.2)
below). Using this formula, Nualart and R{\u{a}}{\c{s}}canu \cite{N-R} proved a general result on existence,
uniqueness and finite moments of the solution to a class of general differential equations. In \cite{B-R} we extended these results to equation (\ref{eq:prin-frac}).

\medskip

Nualart and Sausserau \cite{N-S} obtained the Malliavin differentiability and the existence of density for the solutions to equations considered in \cite{N-R}. Our aim is to extend these results to our equations and to study the existence of density of the solution to (\ref{eqprincipal}).

\medskip

As in all the papers following the methods presented in \cite{N-R}, we will obtain first deterministic results and then we will apply pathwise to the stochastic equations.

\medskip

Our main result states as follow:

\begin{theorem}\label{thprin}  Assume $b, \sigma \in \cac^2_b$ and $\eta \in \cac^{1-\alpha} (-r,0)$. Set $X$ the solution of (\ref{eqprincipal}) and assume that $\sigma >0$ on $\R^+$. Then, for any 
	$t_0 \in (0,T_0]$,  the law of $X_{t_0}$  restricted on $(0,+\infty)$ is absolutely continuous.
\end{theorem}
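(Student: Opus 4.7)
The plan is to apply Malliavin calculus on the Wiener space of the fractional Brownian motion $W^H$ (in the Nualart framework, with Cameron--Martin space $\mathcal{H}$ suited to $H>1/2$), and to verify a Bouleau--Hirsch type criterion: if $X_{t_0}\in\mathbb{D}^{1,2}_{\mathcal{H}}$ and $\|DX_{t_0}\|_{\mathcal{H}}>0$ almost surely on the event $\{X_{t_0}>0\}$, then the law of $X_{t_0}$ restricted to $(0,+\infty)$ is absolutely continuous. The differentiability step follows Nualart--Saussereau \cite{N-S} adapted to the delay framework of \cite{B-R}; the non-degeneracy step is handled by a localization argument that freezes the reflection near $t_0$.

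I would first show that $X_{t_0}\in\mathbb{D}^{1,2}_{\mathcal{H}}$. The process $Z$ of (1.2) is analyzed pathwise by the fractional-calculus arguments of \cite{N-R} combined with the delay estimates of \cite{B-R}; following \cite{N-S} this yields $Z_{t_0}\in\mathbb{D}^{1,2}_{\mathcal{H}}$, with derivative satisfying the linear delay equation
\[
D_u Z_{t_0}=\si(X_{u-r})\mathbf{1}_{[0,t_0]}(u)+\int_u^{t_0} b'(X_s)D_u X_s\,ds+\int_u^{t_0}\si'(X_{s-r})D_u X_{s-r}\,dW^H_s.
\]
Since $Y_t=\max_{s\in[0,t]}Z_s^-$ is $1$-Lipschitz in the sup-norm of $Z|_{[0,t]}$, Malliavin differentiability transfers to $Y_{t_0}$ and hence to $X_{t_0}=Z_{t_0}+Y_{t_0}$.

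For the non-degeneracy I would localize on the event $A=\{X_{t_0}>0\}$. Set $\tau:=\sup\{s\in[0,t_0]:X_s=0\}$ (with $\sup\emptyset=0$). On $A$ one has $\tau<t_0$, the trajectory $X$ is strictly positive on $(\tau,t_0]$, and $Y$ is constant on $[\tau,t_0]$. Using $X_\tau=0$ we get $Z_\tau=-Y_\tau=-Y_{t_0}$, so the argmax of $Z_\cdot^-$ on $[0,t_0]$ is attained at $\tau$; strict positivity of $X$ on $(\tau,t_0]$ gives $Z_s^-<Y_{t_0}$ for every such $s$, hence for every $\ep>0$
\[
\sup_{s\in[\tau+\ep,t_0]}Z_s^-\;<\;Y_{t_0}.
\]
Therefore, for any Cameron--Martin direction $h$ supported in $[\tau+\ep,t_0]$ of sufficiently small $\mathcal{H}$-norm, the perturbed $Z$ keeps the strict inequality, the argmax of the perturbed $Z^-$ remains at $\tau$ where $Z_\tau$ is unperturbed, and thus $Y_{t_0}$ is locally constant in $h$. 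Consequently $D_h X_{t_0}=D_h Z_{t_0}$ for such $h$. Picking $\ep<\min(r,t_0-\tau)$ and taking $h$ concentrated at $u$ close to $t_0$, the leading term of the equation above is $\si(X_{u-r})$, strictly positive since $X_{u-r}\geq 0$ (by the reflection or by $\eta\geq 0$) and $\si>0$ on $\R^+$. A Gronwall estimate on the delay equation then yields $D_u X_{t_0}>0$ for $u$ near $t_0$, hence $\|DX_{t_0}\|_{\mathcal{H}}>0$ on $A$.

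The main obstacle is coupling Malliavin calculus with the random threshold $\tau$, which depends on the full trajectory and in particular on $W^H$. To avoid differentiating through $\tau$ I would partition $A$ along rational intervals $\{\tau\in[a,b]\}$, apply the argument at fixed deterministic parameters $a,b$ and $\ep$, and then take a countable union. The required stability of the strict inequality $\sup_{[\tau+\ep,t_0]}Z_s^-<Y_{t_0}$ under small Cameron--Martin perturbations follows from the Lipschitz continuity of the Skorohod map in sup-norm, but it must be quantified against a modulus-of-continuity bound for $Z$ in the fractional-calculus framework of \cite{N-R,B-R}; matching these two scales (perturbation size versus slack in the inequality) is the technical heart of the argument.
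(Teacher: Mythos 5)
Your overall strategy (Bouleau--Hirsch after showing $X_{t_0}\in\mathbb{D}^{1,2}$ plus non-degeneracy of $DX_{t_0}$ on $\{X_{t_0}>0\}$) matches the paper's, but your first step has a genuine gap and is in fact circular as written. You propose to establish $Z_{t_0}\in\mathbb{D}^{1,2}$ first and then transfer differentiability to $Y_{t_0}=\max_{s\le t_0}Z_s^-$ via Lipschitz continuity of the Skorokhod map. However, $Z_t=\eta_0+\int_0^t b(X_s)ds+\int_0^t\sigma(X_{s-r})dW^H_s$ with $X=Z+Y$, so your displayed equation for $D_uZ_{t_0}$ already contains $D_uX_s$: you cannot differentiate $Z$ before $X$. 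Moreover the transfer itself is not a chain-rule application: $Y_{t_0}$ is a Lipschitz functional of the whole path $Z|_{[0,t_0]}$, not of finitely many $\mathbb{D}^{1,2}$ variables, and $z\mapsto\max_s z_s^-$ is not Fr\'echet differentiable, so some approximation scheme is unavoidable. This is precisely what the paper's Sections \ref{sec:comp}--\ref{seccon} supply: the reflected equation is replaced by penalized equations with drift $b+\frac1\varepsilon f$, whose deterministic Fr\'echet differentiability gives the explicit exponential formula for $D_sX^\varepsilon_t$; the factor $\exp(\frac1\varepsilon\int_s^t f'(X^\varepsilon_u)du)\le 1$ yields the uniform bound $\sup_\varepsilon\mathbf{E}\big[\Vert DX^\varepsilon_t\Vert^2_{\mathcal H}\big]<\infty$, and Lemma \ref{conve} closes the argument. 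Without this (or an equivalent substitute) your differentiability step does not go through.

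Your non-degeneracy argument is genuinely different from the paper's: you freeze the regulator after the last zero $\tau=\sup\{s\le t_0: X_s=0\}$ and argue that perturbations supported in $(\tau+\ep,t_0]$ leave $Y_{t_0}$ unchanged, so $D_hX_{t_0}=D_hZ_{t_0}$ there, with source term $\sigma(X_{u-r})>0$. The idea is attractive (and for $u\in(t_0-r,t_0]$ the delay term indeed vanishes), but it is incomplete exactly where you flag it: $\tau$ is an anticipating random time, the quantitative stability of the argmax of $Z^-$ under Cameron--Martin shifts is asserted rather than proved, and ``$h$ concentrated at $u$'' is not an element of $\mathcal{H}$, so you still must pass from directional derivatives to the kernel $u\mapsto D_uX_{t_0}$. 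The paper sidesteps all of this by staying at the penalized level: restricting to $\{X_{t_0}\ge a\}$, using the forward hitting time $\tau_s=\inf\{t\ge s: X_t=\frac a2\}$ so that on $\{\tau_s>t_0\}$ the penalty is inactive for small $\varepsilon$ and $D_sX^\varepsilon_{t_0}$ solves an unpenalized linear equation with explicitly positive limit $G_{s,t_0}$, and then combining the sign information $D_sX_{t_0}\ge0$ near $t_0$ with the weak $L^2$ convergence of Remark \ref{feble}. If you wish to keep your route, the missing work is the measurable localization in $\tau$ together with the perturbation-versus-slack estimate you describe in your last paragraph; as it stands that is the heart of the proof and it is not carried out.
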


Notice that we obtain the regularity restricted on $(0,+\infty)$ since we are not able to prove that $P(X_{t_0}=0)=0$. It is the same situation that appears in the study of SPDEs \cite{DM-P-2}.
In our paper, the main difficult of the problem lies in the lack of properties of the Skorohod map with respect to the topologies used working with fractional Brownian motion.

\medskip

Stochastic systems with delay have been studied deeply (see, as a basic reference, \cite{M}), but the literature about stochastic differential equations with delay driven by a fractional Brownian
motion is scarce. There are references dealing with existence and uniqueness of solution \cite{FR, LT, NNT, TT},
existence and regularity of the density \cite{LT} and convergence when the delay goes to zero \cite{F-R}. 
There has been little work on stochastic differential equations with delay and non-negativity constraints driven by standard Brownian motion.
We can only refer the reader to  \cite{K},  dedicated to study numerical methods, and  \cite{K-W}
where the authors obtain sufficient conditions for existence and uniqueness of stationary solutions. 
The study of equations with fractional Brownian motion and non negative constraints is also weak.  The existence of solution is given in \cite{FRa} and the existence and uniqueness in \cite{FSa}. In \cite{DGHT} the authors deal with  existence and uniqueness using rough path techniques.

\medskip

On the other hand, the study of the regularity of the solution to stochastic differential equations with reflection is also scarce. We can refer the reader to the initial papers dealing with diffusions with reflection \cite{LM} and  \cite{LNS} and the work dedicated to stochastic partial differential equations \cite{DM-P-2} and \cite{NP}.

\medskip

The structure of the paper is as follows. In the next Section, we give some preliminaries about the Skorohod problem and fractional calculus. In Sections \ref{sec:comp}, \ref{sec:approx}, \ref{secdif} and \ref{seccon}  we deal with the deterministic case. Section 3 is devoted to get some comparison, existence and uniqueness theorems for deterministic equations.  In Section \ref{sec:approx} we obtain the convergence results to solve the deterministic Skorohod problem. The Fr\'echet differentiability of our deterministic equations is considered in Section \ref{secdif} and in Section \ref{seccon} we put together the deterministic results of the previous sections to obtain the approximations that we will consider pathwise in our stochastic case. Finally, Section \ref{secsto} is devoted to apply all the deterministic  results to the stochastic case proving our main theorem. 


\renewcommand{\theequation}{2.\arabic{equation}}
\setcounter{equation}{0}

\section{Preliminaries}

\subsection{Skorohod Problem}

Let  $$\cac_+(\R_+,\R):=\left\{x\in\cac(\R_+,\R): x(0)\in\R_+\right\}.$$
\vskip 5pt
\noindent
We can recall now the Skorokhod problem. 

\begin{definition} \label{def:skorokhod}
Given a path $z\in\cac_+(\R_+,\R)$, we say that a pair $(x,y)$ of functions in $\cac_+(\R_+,\R)$ solves the Skorokhod problem for $z$ with reflection if
\begin{enumerate}
\item $x_t=z_t+y_t$ for all $t\geq 0$ and $x_t\in\R_+$ for each $t\geq 0$,
\item $y(0)=0$ and $y$ is non-decreasing,
\item ${\displaystyle \int_0^t x_sdy_s=0}$ for all $t\geq 0$, so $y$ can increase only when $x$ is at zero.
\end{enumerate}
\end{definition}
\noindent
It is known that we have an explicit formula for $y$ in terms of $z$: 
\begin{equation*}
y_t=\max_{s\in[0,t]} \left(z_s\right)^-.
\end{equation*}

The path $z$ is called the reflector of $x$ and the path $y$ is called the regulator of $x$. We use the Skorokhod mapping for constraining a continuous real-valued function to be non-negative by means of reflection at the origin. We will apply it to each path of $z$ defined by \eqref{eq:F}.

\subsection{Fractional calculus}

Let $\alpha\in\lp 0,\frac12\rp$ and  $(a,b)\subseteq [-r,T]$. We denote by $W_1^{\alpha}(a,b)$ the space of measurable functions $f:[a,b]\rightarrow\R$ such that
\[\left\|f\right\|_{\alpha,1(a,b)}:=\sup_{u\in[a,b]} \Delta_t^\alpha(f)=\sup_{u\in[a,b]}\left(\left|f(u)\right|+\int_{a}^u\frac{\left|f(u)-f(v)\right|}{(u-v)^{\alpha+1}}dv\right)<\infty.\]

We also denote by $W_2^{1-\alpha}(0,T)$ the space of measurable functions $g:[0,T]\rightarrow\R$ such that
\[\left\|g\right\|_{1-\alpha,2}:=\sup_{0\leq u<v\leq T}\left(\frac{\left|g(v)-g(u)\right|}{|v-u|^{1-\alpha}}+\int_{u}^v\frac{\left|g(y)-g(u)\right|}{(y-u)^{2-\alpha}}dy\right)<\infty.\]
\noindent
Finally, we define the supremum norm for functions $f:[a,b]\rightarrow\R$ as
\[\|f\|_{\infty(a,b)}=\sup_{u\in[a,b]}|f(u)|,\]
the space of $\lambda$-H\"older continuous functions $f:[a,b]\rightarrow \R$ denoted by $\cac^\lambda (a,b)$ for any $0<\lambda\leq 1$ with the norm
\[\left\|f\right\|_{\lambda(a,b)}:=\left\|f\right\|_{\infty(a,b)}+\sup_{a\leq u<v\leq b}\frac{\left|f(v)-f(u)\right|}{(v-u)^{\lambda}}<\infty,\]
and for any integer $k\geq 1$ we denote by $\cac^k_b$ the class of real-valued functions on $\R$ which are $k$ times continuously differentiable with bounded partial derivatives up to the $k$-th order.
\vskip 5pt

\noindent
Clearly, for any $\varepsilon>0$,
\begin{equation}\label{inclusion}
C^{1-\alpha+\varepsilon}(0,T) \subset W^{1-\alpha}_2(0,T)
\subset C^{1-\alpha}(0,T).
\end{equation}
Moreover, as $\alpha \in (0, \frac12)$, 
$$C^{1-\alpha}(0,T) \subset W^{\alpha}_1(0,T).$$
If $f\in C^{\lambda }(a,b)$ and $g\in C^{\mu }(a,b)$ with $\lambda+\mu >1$, it is proved in \cite{Z} that the Riemman-Stieltjes integral $\int_{a}^{b}fdg$ exists and it can be expressed as 
\begin{equation}
\int_{a}^{b}fdg=(-1)^{\alpha }\int_{a}^{b}D_{a+}^{\alpha
}f(t)D_{b-}^{1-\alpha }g_{b-}(t)dt,  \label{eq:forpart}
\end{equation}
where $g_{b-}(t)=g(t)-g(b), $ $1-\mu <\alpha <\lambda $, and the fractional derivatives are defined as
\begin{eqnarray*}
D_{a+}^{\alpha }f(t) &=&\frac{1}{\Gamma (1-\alpha )}\left( \frac{f(t)}{%
(t-a)^{\alpha }}+\alpha \int_{a}^{t}\frac{f(t)-f(s)}{(t-s)^{\alpha +1}}%
ds\right) ,  \label{d2} \\[4pt]
D_{b-}^{\alpha }f(t) &=&\frac{(-1)^{\alpha }}{\Gamma (1-\alpha )}\left( 
\frac{f(t)}{(b-t)^{\alpha }}+\alpha \int_{t}^{b}\frac{f(t)-f(s)}{%
(s-t)^{\alpha +1}}ds\right).  \label{d3}
\end{eqnarray*}
We refer to \cite{N-R} and \cite{Z} and the references therein for a detailed account about this generalized integral and the fractional calculus.

\medskip

\noindent
Let $\Omega=C_0([0,T]; \R)$ be the Banach space of continuous functions, null at time 0, equipped with the supremum norm.
Let ${\rm P}$ be the unique probability measure on $\Omega$ such that the canonical process $\{W^H_t, t \in [0,T]\}$ is an $1$-dimensional fractional Brownian motion with Hurst parameter $H>\frac12$.

\medskip

\noindent
We denote by $\mathcal{E}$ the space of step functions on $[0,T]$ with values in $\R$. Let $\mathcal{H}$ be the Hilbert space defined as the closure of $\mathcal{E}$ with respect to the scalar product
$$\langle {\bf 1}_{[0, t]},{\bf 1}_{[0, s]} \rangle_{\mathcal{H}}= R_H(t, s),$$ 
where
$$R_H(t, s)=\int_0^{t \wedge s} K_H(t,r) K_H(s,r) dr,$$
and $K_H(t,s)$ is the square integrable kernel defined by
\begin{equation} \label{kh}
K_H(t,s)=c_H s^{1/2-H} \int_s^t (u-s)^{H-3/2} u^{H-1/2} du,\qquad\mbox{for}\;t>s,
\end{equation}
where $c_H=\sqrt{\frac{H(2H-1)}{\beta(2-2H, H-1/2)}}$ and $\beta$ denotes the Beta function. And for $t \leq s$, we set $K_H(t,s)=0$.

\medskip

\noindent
The mapping ${\bf 1}_{[0, t]} \rightarrow W^{H}_{t}$ can be extended to an isometry between $\mathcal{H}$ and the Gaussian space $\mathcal{H}_1$ associated to $W^H$. We denote this isometry by $\varphi \rightarrow W^H(\varphi)$.

\medskip

\noindent
Consider the operator $K^{\ast}_H$ from $\mathcal{E}$ to $L^2(0,T;\R)$
defined by
$$(K^{\ast}_H \varphi) (s)=\int_s^T \varphi(t) \partial_t K_H(t,s) dt.$$
From (\ref{kh}), we get
$$\partial_t K_H(t,s)=c_H \left(\frac{t}{s} \right)^{H-1/2} (t-s)^{H-3/2}.$$
Notice that
$$K^{\ast}_H({\bf 1}_{[0, t_1]},\ldots,{\bf 1}_{[0, t_m]} )
=(K_H(t_1,\cdot), \ldots, K_H(t_m,\cdot)).$$
For any $\varphi, \psi \in \mathcal{E}$, 
$$\langle \varphi, \psi  \rangle_{\mathcal{H}}=\langle K^{\ast}_H \varphi, K^{\ast}_H \psi \rangle_{L^2(0,T;\R^m)}={\rm E}(W^H(\varphi)W^H(\psi))$$
and $K^{\ast}_H$ provides an isometry between the Hilbert space $\mathcal{H}$ and a closed subspace of $L^2(0,T;\R)$.

\medskip

\noindent
Following \cite{N-S}, we consider the fractional version of the Cameron-Martin space $\mathcal{H}_H:=\mathcal{K}_H(L^2(0,T;\R))$, where for $h \in   L^2(0,T;\R)$,
$$(\mathcal{K}_H h)(t):=\int_0^t K_H(t,s) h_s ds.$$
We finally denote by $\mathcal{R}_H=\mathcal{K}_H \circ \mathcal{K}_H^{\ast}: \mathcal{H} \rightarrow\mathcal{H}_H$ the operator
$$\mathcal{R}_H \varphi=\int_0^{\cdot} K_H(\cdot, s) (\mathcal{K}^{\ast}_H h)(s) ds.$$
We remark that for any $\varphi \in \mathcal{H}$, $\mathcal{R}_H \varphi$ is H\"older continuous of order $H$. Therefore, for any $1-H<\alpha<1/2$,
$$\mathcal{H}_H \subset C^{H}(0,T;\R) \subset W^{1-\alpha}_2(0,T;\R).$$
Notice that $\mathcal{R}_H {\bf 1}_{[0,t]}=R_H(t, \cdot)$, and, as a consequence, $\mathcal{H}_H$ is the Reproducing Kernel Hilbert Space associated with the Gaussian process $W^H$. The injection
$\mathcal{R}_H: \mathcal{H} \rightarrow \Omega$ embeds $\mathcal{H}$ densely into $\Omega$ and for any $\varphi \in \Omega^{\ast} \subset  \mathcal{H}$,
$${\rm E} \left( e^{i W^H(\varphi)}\right)=
\exp \left( -\frac12 \Vert \varphi \Vert^2_{\mathcal{H}} \right).$$
As a consequence, $(\Omega, \mathcal{H}, {\rm P})$ is an abstract Wiener space in the sense of Gross.

 
\renewcommand{\theequation}{3.\arabic{equation}}
\setcounter{equation}{0}
\section{Comparison and existence and uniqueness theorems} \label{sec:comp}
\noindent
Let $0 < \alpha < \frac12$. Consider now the following equation
\begin{equation}
w_t = \eta_0+\zeta_t+\int_0^t b(w_s)ds,\quad t\in(0,T], \label{eq:th-exist}
\end{equation}
where $\zeta \in \cac^{1-\alpha} (0,T)$ and $\eta_0 >0$. 

\medskip

\noindent
In this section we will prove the existence and uniqueness of solution to equation \eqref{eq:th-exist} and a comparison result.
\begin{lemma} \label{lem:exist-unic}
Assume that $b$ is a Lipschitz function and that $\zeta \in \cac^{1-\alpha} (0,T)$. Then \eqref{eq:th-exist} has a unique solution $w \in \cac^{1-\alpha} (0,T)$.
\end{lemma}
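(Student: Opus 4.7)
The plan is to reduce the equation to a classical ODE by absorbing the non-smooth part $\zeta$ into the unknown. Setting $v_t := w_t - \eta_0 - \zeta_t$, equation \eqref{eq:th-exist} is equivalent to
$$v_t = \int_0^t b(v_s + \eta_0 + \zeta_s)\, ds, \qquad t \in [0,T].$$
Since $b$ is Lipschitz (hence of linear growth) and $\zeta$ is continuous, the integrand is Lipschitz in $v$ uniformly in $s$ and jointly continuous in $(s,v)$, so this is a standard Carath\'eodory--Picard setup at the level of continuous $v$.

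For existence and uniqueness I would apply Banach's fixed-point theorem to the map $\Phi(v)_t := \int_0^t b(v_s + \eta_0 + \zeta_s)\, ds$ on $\cac([0,T_1];\R)$ endowed with the supremum norm, choosing $T_1$ small enough that $L T_1 < 1$, where $L$ denotes the Lipschitz constant of $b$. The linear growth of $b$ together with Gronwall's lemma yields an a priori $L^\infty$ bound on any continuous solution on $[0,T]$, so the local fixed point extends to the whole interval $[0,T]$ by the usual concatenation argument. Uniqueness among continuous solutions follows at once from Gronwall applied to
$$|v_t^1 - v_t^2| \leq L \int_0^t |v_s^1 - v_s^2|\, ds.$$

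For the claimed $(1-\alpha)$-H\"older regularity, once a continuous solution $v$ is in hand, the integrand $s \mapsto b(v_s + \eta_0 + \zeta_s)$ is bounded on $[0,T]$ by the a priori bound, so $v$ is Lipschitz in $t$ and in particular $v \in \cac^{1-\alpha}(0,T)$. Therefore $w = v + \eta_0 + \zeta \in \cac^{1-\alpha}(0,T)$, since $\zeta \in \cac^{1-\alpha}(0,T)$ by hypothesis. Conversely, any $\cac^{1-\alpha}$ solution of \eqref{eq:th-exist} is in particular continuous, and hence coincides, after the substitution $v = w - \eta_0 - \zeta$, with the unique continuous solution just constructed; this gives uniqueness in $\cac^{1-\alpha}(0,T)$.

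There is no serious obstacle here: the whole point of the reduction is that the only non-smooth object $\zeta$ is never differentiated, so the fixed-point argument lives entirely at the level of continuous functions $v$, and the H\"older regularity of $w$ is inherited from $\zeta$ after the fact. The same substitution will, moreover, make the comparison result of this section transparent, since it reduces to comparing two classical ODEs in $v$.
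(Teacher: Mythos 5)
Your proof is correct, but it takes a genuinely different route from the paper. The paper disposes of this lemma in one line by declaring it ``a simple case of the proof of Theorem 5.1 in \cite{N-S}'', i.e.\ it defers to the fixed-point machinery of Nualart--Saussereau, which is built to handle the harder situation where the rough path is integrated against a state-dependent coefficient and therefore works in the fractional norms $\Vert\cdot\Vert_{\alpha,1}$. You instead exploit the special structure of \eqref{eq:th-exist} --- the non-smooth term $\zeta$ enters purely additively --- to substitute $v=w-\eta_0-\zeta$ and reduce everything to a classical ODE with a Lipschitz, jointly continuous right-hand side; Picard--Banach plus Gronwall then give existence and uniqueness among continuous functions, and the $(1-\alpha)$-H\"older regularity of $w$ is recovered a posteriori because $v$ is Lipschitz (hence $(1-\alpha)$-H\"older on the bounded interval, as $\alpha\in(0,\tfrac12)$) and $\zeta\in\cac^{1-\alpha}(0,T)$ by hypothesis. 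This is elementary, self-contained, and complete for the statement at hand; what it gives up is generality --- the reduction collapses the moment $\zeta$ is replaced by a genuine Young integral $\int_0^t\sigma(\cdot)\,dg_s$, which is exactly the case the cited theorem of \cite{N-S} is designed for and which reappears in Section \ref{secdif}. One small caveat on your closing remark: the comparison result of this section is not quite a free consequence of the substitution, since comparison for ODEs with merely Lipschitz drifts still requires an argument (the paper uses the $\varphi_k$ regularization of Lemma \ref{lem:comp1bis2}); but that does not affect the proof of the present lemma.
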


\begin{proof}
	The proof is a simple case of the proof of Theorem 5.1 in \cite{N-S}.
\end{proof}

\begin{lemma} \label{lem:comp1bis2}
Let $b,\,\tilde{b}$ Lipschitz functions, such that $b\leq \tilde{b}$, $\zeta \in \cac^{1-\alpha} (0,T)$ and $t\in[0,T]$, then if
\begin{eqnarray*}
w_t&=&\eta_0+\zeta_t+\int_0^t b(w_s)ds,\\
\tilde{w}_t&=&\eta_0+\zeta_t+\int_0^t \tilde{b}(\tilde{w}_s)ds,
\end{eqnarray*}
we have that
$w_t\leq \tilde{w}_t$, for any $t \in [0,T]$.
\end{lemma}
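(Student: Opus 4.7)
The plan is to subtract the two equations and show that the difference is non-positive. Setting $u_t := w_t - \tilde w_t$, I would write
$$
u_t = \int_0^t \bigl(b(w_s)-b(\tilde w_s)\bigr)\,ds + \int_0^t \bigl(b(\tilde w_s)-\tilde b(\tilde w_s)\bigr)\,ds,
$$
with $u_0 = 0$. The path $\zeta$ cancels in the subtraction, which is why no fractional-calculus estimate enters a pure comparison result of this kind. The second integral is non-positive by the hypothesis $b\le\tilde b$; I would denote it by $h_t$ and note that it is absolutely continuous with $h'_s = b(\tilde w_s)-\tilde b(\tilde w_s)\le 0$.

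For the first integral I would linearize. Define
$$
\theta(s) := \begin{cases}\dfrac{b(w_s)-b(\tilde w_s)}{w_s-\tilde w_s} & \text{if } w_s\neq\tilde w_s,\\[2pt] 0 & \text{otherwise,}\end{cases}
$$
so that $\theta$ is measurable with $|\theta(s)|\le L$ (the Lipschitz constant of $b$) and $b(w_s)-b(\tilde w_s)=\theta(s)u_s$. Hence $u$ is an absolutely continuous solution of the linear integral equation
$$
u_t = h_t + \int_0^t \theta(s)\,u_s\,ds.
$$

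Differentiating a.e.\ and applying variation of constants with $u_0=0$ yields
$$
u_t = \int_0^t \exp\!\Bigl(\int_s^t \theta(r)\,dr\Bigr)\,\bigl(b(\tilde w_s)-\tilde b(\tilde w_s)\bigr)\,ds.
$$
The exponential factor is positive and the bracket is non-positive, so $u_t\le 0$ for every $t\in[0,T]$; equivalently, $w_t\le\tilde w_t$, which is what we want.

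I do not foresee a serious obstacle. Once the linearization coefficient $\theta$ is introduced the argument reduces to a one-line variation-of-constants computation, and the only routine point to check is the measurability and essential boundedness of $\theta$, which both follow at once from Lipschitz continuity of $b$. An equivalent finish would be to apply Gronwall's lemma to the quantity $(u_t)^+$ after plugging the sign information on $h$ and the bound $\theta(s)u_s\le L(u_s)^+$ valid on $\{u_s>0\}$, but the integrating-factor route above is slightly cleaner.
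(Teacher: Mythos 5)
Your main argument is correct, but it takes a genuinely different route from the paper. The paper runs a Yamada--Watanabe-type argument: it introduces the smooth approximations $\varphi_k$ of $x\mapsto (x^+)^2$ (with $0\le\varphi_k'(x)\le 2x^+$), writes $\varphi_k(w_t-\tilde w_t)$ as an integral, uses $b\le\tilde b$ together with the Lipschitz property of $\tilde b$ (you instead linearize $b$ and isolate $b(\tilde w_s)-\tilde b(\tilde w_s)\le 0$; both splittings are legitimate since both coefficients are Lipschitz), lets $k\to 0$ to obtain $((w_t-\tilde w_t)^+)^2\le C\int_0^t((w_s-\tilde w_s)^+)^2\,ds$, and concludes by Gronwall. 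Your integrating-factor computation is more elementary and even yields an explicit nonpositive representation of $u_t=w_t-\tilde w_t$; the key points you rely on --- cancellation of $\zeta$, hence absolute continuity (indeed Lipschitz continuity) of $u$, and measurability and boundedness of the quotient $\theta$ --- are all sound, so the variation-of-constants identity holds for a.e.-derivatives and the proof closes. What the paper's heavier machinery buys is robustness: the $\varphi_k$ device is the one that survives when the It\^o formula produces second-order terms, which is why the authors cite Gy\"ongy and reuse the same $\varphi_k$ later (Step 2 of Proposition \ref{th:conv-eps}, where a reflection term $d\phi_s$ must be handled and no linearization is available). One caveat on your parenthetical alternative: applying Gronwall directly to $(u_t)^+$ does not work as sketched, because on the set $\{u_s\le 0\}$ the quantity $(\theta(s)u_s)^+=(b(w_s)-b(\tilde w_s))^+$ need not vanish (take $b$ decreasing), so the inequality $(u_t)^+\le L\int_0^t(u_s)^+\,ds$ does not follow; one must instead work with $((u_t)^+)^2$, whose derivative carries the extra factor $(u_t)^+$ that kills the contribution of that set --- which is precisely what the paper's $\varphi_k'$ accomplishes.
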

\begin{proof}
For $k>0$, consider the function
\[\varphi_k(x)=\1_{\{x\geq 0\}}\int_0^x\int_0^y \rho_k(z)dzdy,\]
where
\[\rho_k(z)=\left\{
\begin{array}{ll}
     2kz, & \textrm{if} \;\; z\in\left[0,\frac{1}{k}\right],
  \\[5pt]
  2 \times \1_{\{z\geq \frac{1}{k}\}}, &\textrm{otherwise}.
\end{array}
\right.\] 
\noindent
From \cite[Theorem 5.1]{G}, we can check that $\varphi_k\in \cac^2_b$ and that their derivatives are bounded:
\[0\leq \varphi'_k(x)\leq 2x^+,\;\textrm{and}\; 0\leq  \varphi''_k(x)\leq 2\times\1_{\{x\geq 0\}},\]
where $x^+=\max(0,x)$. We also have that $\varphi_k(x)\uparrow (x^+)^2$ when $k\rightarrow 0$.

\medskip

\noindent
Then using this properties on $\varphi_k$ we have
\begin{eqnarray*}
\varphi_k(w_t-\tilde{w}_t)&=& \int_0^t \varphi'_k(w_s-\tilde{w}_s)(b(w_s)-\tilde{b}(\tilde{w}_s)) ds\\
&\leq& \int_0^t \varphi'_k(w_s-\tilde{w}_s)(\tilde{b}(w_s)-\tilde{b}(\tilde{w}_s)) ds\\
&\leq& C\int_0^t (w_s-\tilde{w}_s)^+|w_s-\tilde{w}_s| ds\\
&\leq& C\int_0^t \left((w_s-\tilde{w}_s)^+\right)^2 ds,
\end{eqnarray*}
where $C$ does not depend on $k$.
Now, letting $k\rightarrow 0$ the last inequality becomes
\[\left((w_t-\tilde{w}_t)^+\right)^2\leq C\int_0^t \left((w_s-\tilde{w}_s)^+\right)^2 ds,\]
and using the Gronwall's inequality we obtain that $\left((w_t-\tilde{w}_t)^+\right)^2=0$ and from here the desired result.
\end{proof}


\renewcommand{\theequation}{4.\arabic{equation}}
\setcounter{equation}{0}
\section{Convergence results} \label{sec:approx}
\noindent 
In this section, we consider the following equations
\begin{eqnarray} \label{eq:det}
x_t^{(\zeta)}&=&\eta_0+\zeta_t+ \int_0^t b(x_s^{(\zeta)}) ds+ y_t^{(\zeta)},\,\qquad t\in [0,T],
\end{eqnarray} 
where $y^{(\zeta)}$ is the regulator term of $x^{(\zeta)}$, 
and
\begin{eqnarray} \label{eq:eps}
x_t^{(\zeta),\varepsilon}&=&\eta_0+\zeta_t+ \int_0^t \lc b(x_s^{(\zeta),\varepsilon})+\frac{1}{\varepsilon}f(x_s^{{(\zeta),\varepsilon}}) \rc ds,\,\qquad t\in [0,T].
\end{eqnarray}

\noindent
We assume that $f$ is a $\cC^2_b$ not increasing function such that 
\[\begin{cases}
f(x)=0, & \mbox{if}\; x \ge 0, \\
0 < f(x) \le x^-, &\mbox{if}\; x<0,\end{cases}\]
with $x^-=\max(0,-x)$.

\medskip

\noindent
We will assume this definition of $f$ throughout the paper.

Following the same computations in \cite{B-R}, we can easily check that equation \eqref{eq:det} has a unique solution.
The purpose of this section is to prove that equation \eqref{eq:eps} has a unique solution and moreover, these solutions converge to the solution of equation \eqref{eq:det} when $\varepsilon$ goes to zero. The result states as follows:

\begin{theorem} \label{th:boconv}
Assume that $b$ is a Lipschitz function,  $\zeta \in \cac^{1-\alpha} (0,T)$.  Then the solutions $x^{(\zeta),\varepsilon}$ of \eqref{eq:eps} converge uniformly, as $\varepsilon\rightarrow 0$,  to $x^{(\zeta)}$,  the unique solution of the equation \eqref{eq:det}. Moreover, $x^{(\zeta)} \in \cac^{1-\alpha} (0,T).$
\end{theorem}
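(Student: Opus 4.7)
The plan is to combine the existence and comparison lemmas of Section 3 with a pathwise energy estimate comparing the penalized solution to the reflected one. First, existence and uniqueness of $x^{(\zeta),\varepsilon}$ solving \eqref{eq:eps} follows from Lemma \ref{lem:exist-unic} applied to the drift $b_\varepsilon:=b+\varepsilon^{-1}f$, which is Lipschitz since $b$ is Lipschitz and $f\in\cac^2_b$. Next I would derive $\varepsilon$-uniform bounds on $x^{(\zeta),\varepsilon}$ and on the regulator $y^{(\zeta),\varepsilon}_t:=\varepsilon^{-1}\int_0^t f(x^{(\zeta),\varepsilon}_s)\,ds$: since $f\ge 0$, Lemma \ref{lem:comp1bis2} applied with $b\le b_\varepsilon$ gives $w\le x^{(\zeta),\varepsilon}$, where $w\in\cac^{1-\alpha}(0,T)$ solves \eqref{eq:th-exist} without penalty; this yields a uniform lower bound. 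For the upper bound, setting $\tau_t:=\sup\{s\le t:x^{(\zeta),\varepsilon}_s\le 0\}\cup\{0\}$, the penalty vanishes on $(\tau_t,t]$, so
\[
x^{(\zeta),\varepsilon}_t\le |x^{(\zeta),\varepsilon}_{\tau_t}|+2\|\zeta\|_\infty+|b(0)|T+M\int_0^t|x^{(\zeta),\varepsilon}_s|\,ds,
\]
and Gronwall closes the estimate. Rewriting \eqref{eq:eps} then produces $\|y^{(\zeta),\varepsilon}\|_\infty\le C$, hence the crucial $L^1$ control $\int_0^T f(x^{(\zeta),\varepsilon}_s)\,ds\le C\varepsilon$.

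The heart of the proof is an energy estimate. Set $u^\varepsilon_t:=x^{(\zeta),\varepsilon}_t-x^{(\zeta)}_t$; since $\zeta$ cancels, $u^\varepsilon$ is continuous and of bounded variation with $u^\varepsilon_0=0$, so the chain rule for continuous BV functions yields
\[
(u^\varepsilon_t)^2=2\int_0^t u^\varepsilon_s(b(x^{(\zeta),\varepsilon}_s)-b(x^{(\zeta)}_s))\,ds+2\int_0^t u^\varepsilon_s\,dy^{(\zeta),\varepsilon}_s-2\int_0^t u^\varepsilon_s\,dy^{(\zeta)}_s.
\]
The middle integral is non-positive since on $\{f(x^{(\zeta),\varepsilon})>0\}=\{x^{(\zeta),\varepsilon}<0\}$ one has $u^\varepsilon<0$ (because $x^{(\zeta)}\ge 0$). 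In the last integral, $dy^{(\zeta)}$ is supported on $\{x^{(\zeta)}=0\}$, giving $-2\int_0^t u^\varepsilon_s\,dy^{(\zeta)}_s=-2\int_0^t x^{(\zeta),\varepsilon}_s\,dy^{(\zeta)}_s\le 2\,y^{(\zeta)}_T\,\sup_{s\le T}(x^{(\zeta),\varepsilon}_s)^-$. Bounding the first term by $2M\int_0^t(u^\varepsilon_s)^2\,ds$ and invoking Gronwall,
\[
\sup_{t\in[0,T]}|x^{(\zeta),\varepsilon}_t-x^{(\zeta)}_t|^2\le C\,\sup_{s\in[0,T]}(x^{(\zeta),\varepsilon}_s)^-.
\]

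The remaining task is to show $\sup_s(x^{(\zeta),\varepsilon}_s)^-\to 0$. If $\delta:=\sup_s(x^{(\zeta),\varepsilon}_s)^->0$, pick $t_0$ with $x^{(\zeta),\varepsilon}_{t_0}=-\delta$ and let $t_1\le t_0$ be the last instant with $x^{(\zeta),\varepsilon}_{t_1}=-\delta/2$, so that $x^{(\zeta),\varepsilon}<-\delta/2$ on $(t_1,t_0]$. Because the penalty only pushes upward,
\[
-\delta/2=x^{(\zeta),\varepsilon}_{t_0}-x^{(\zeta),\varepsilon}_{t_1}\ge (\zeta_{t_0}-\zeta_{t_1})-MC|t_0-t_1|,
\]
which together with $\zeta\in\cac^{1-\alpha}(0,T)$ yields $|t_0-t_1|\ge c\,\delta^{1/(1-\alpha)}$. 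On $[t_1,t_0]$ one has $f(x^{(\zeta),\varepsilon})\ge f(-\delta/2)>0$, so
\[
C\varepsilon\ge\int_0^T f(x^{(\zeta),\varepsilon}_s)\,ds\ge c\,\delta^{1/(1-\alpha)}\,f(-\delta/2),
\]
forcing $\delta\to 0$ as $\varepsilon\to 0$; inserting this into the energy estimate gives the claimed uniform convergence. Finally $x^{(\zeta)}\in\cac^{1-\alpha}(0,T)$ follows from the Skorokhod representation $y^{(\zeta)}_t=\max_{s\le t}(z_s)^-$ with $z=\eta_0+\zeta+\int_0^\cdot b(x^{(\zeta)}_s)\,ds\in\cac^{1-\alpha}$, since the running maximum of a H\"older function preserves the exponent.

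The main obstacle is precisely this last step. A priori $y^{(\zeta),\varepsilon}$ carries no uniform modulus of continuity, because its density $\varepsilon^{-1}f(x^{(\zeta),\varepsilon})$ may blow up in $L^\infty$, so standard equicontinuity or Arzel\`a--Ascoli arguments on $\{x^{(\zeta),\varepsilon}\}$ are unavailable. The decisive structural fact is that the penalty acts only in one direction: downward excursions of $x^{(\zeta),\varepsilon}$ must be driven by $\zeta$ and $b$, hence inherit the $(1-\alpha)$-H\"older slowness of $\zeta$, and this combined with the $L^1$ estimate $\int f(x^{(\zeta),\varepsilon})=O(\varepsilon)$ is exactly what rules out large negative excursions and closes the argument.
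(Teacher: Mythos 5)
Your argument is essentially correct, but it takes a genuinely different route from the paper's. The paper splits the proof into Propositions \ref{th:sol-eps}--\ref{prop:G_sol}: after the same existence step for $x^{(\zeta),\varepsilon}$, it sandwiches $x^{(\zeta),\varepsilon}$ between the unpenalized solution $v$ and an auxiliary process $\tilde u_t=u_t+\sup_{s'\le t}u_{s'}^-$, uses the comparison Lemma \ref{lem:comp1bis2} to show that $\varepsilon\mapsto x^{(\zeta),\varepsilon}$ is \emph{monotone}, obtains a pointwise limit $G$ whose convergence is upgraded to uniform by a Dini-type argument, and only then identifies $G$ with $x^{(\zeta)}$ by verifying the three conditions of the Skorokhod problem (the complementarity condition via the supports of the penalization measures $\mu_\varepsilon$). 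You instead take the existence and uniqueness of $x^{(\zeta)}$ as given (as the paper also does, citing \cite{B-R} just before the theorem) and compare $x^{(\zeta),\varepsilon}$ with it directly, applying the chain rule for continuous BV functions to $(x^{(\zeta),\varepsilon}-x^{(\zeta)})^2$ --- the key observation that $\zeta$ cancels in the difference is what makes this legitimate even though $\zeta$ is only H\"older --- then using the signs of the two reflection terms and Gronwall to reduce everything to $\sup_s(x^{(\zeta),\varepsilon}_s)^-\to 0$, which your excursion argument (downward excursions are driven only by $\zeta$ and $b$, hence last at least $c\,\delta^{1/(1-\alpha)}$, which is incompatible with $\int_0^T f(x^{(\zeta),\varepsilon}_s)\,ds=O(\varepsilon)$) settles. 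Your route buys a quantitative bound, $\sup_t|x^{(\zeta),\varepsilon}_t-x^{(\zeta)}_t|^2\le C\sup_s(x^{(\zeta),\varepsilon}_s)^-$, from which an explicit rate in $\varepsilon$ could be extracted; the paper's route buys the monotonicity in $\varepsilon$, which is not a luxury here: it is reused in Proposition \ref{prop:G_sol} (monotone convergence of $\mu_\varepsilon$ and of $\int_0^t f(x^{(\zeta),\varepsilon}_s)\,ds$) and in the stochastic part (Proposition \ref{propd12} deduces $L^2$ convergence of $X_t^\varepsilon$ from monotonicity), so if your proof were substituted those later arguments would need rewriting. Two small repairs: in your a priori upper bound the term $|x^{(\zeta),\varepsilon}_{\tau_t}|$ should be $\max(\eta_0,0)$ (by continuity $x^{(\zeta),\varepsilon}_{\tau_t}\le 0$ unless $\tau_t=0$), or else controlled through the lower bound $x^{(\zeta),\varepsilon}\ge v$; and the final H\"older claim needs the remark that $b(x^{(\zeta)})$ is bounded so that $z\in\cac^{1-\alpha}(0,T)$, after which the oscillation bound for the running maximum is exactly the estimate $\Vert G\Vert_{1-\alpha(0,T)}\le 2\Vert z^{(\zeta)}\Vert_{1-\alpha(0,T)}$ that the paper imports from \cite{B-R}.
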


The proof of this theorem follows easily from the following three propositions in this section.

\begin{proposition} \label{th:sol-eps} 
Under the hypothesis of Theorem \ref{th:boconv}, equation \eqref{eq:eps} has a unique solution $x_t^{(\zeta),\varepsilon} \in  \cac^{1-\alpha} (0,T)$.
\end{proposition}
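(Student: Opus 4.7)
The approach is to reduce equation \eqref{eq:eps} to the form already covered by Lemma \ref{lem:exist-unic}, by absorbing the penalization into a single Lipschitz drift. This is attractive because Lemma \ref{lem:exist-unic} already handles the deterministic integral equation with a Lipschitz drift and a $(1-\alpha)$-Hölder driving term $\zeta$.

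First, I would set
\[
\tilde{b}_\varepsilon(x) := b(x) + \varepsilon^{-1} f(x).
\]
Since $b$ is Lipschitz by hypothesis and $f \in \cC^2_b$ has bounded first derivative, $f$ is globally Lipschitz on $\R$; therefore $\tilde{b}_\varepsilon$ is Lipschitz on $\R$ with constant at most $L_b + \varepsilon^{-1}\|f'\|_\infty$. Consequently, for each fixed $\varepsilon>0$, equation \eqref{eq:eps} is nothing but
\[
x_t = \eta_0 + \zeta_t + \int_0^t \tilde{b}_\varepsilon(x_s)\, ds, \qquad t \in [0,T].
\]

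Second, with $\zeta \in \cac^{1-\alpha}(0,T)$ by hypothesis, I would invoke Lemma \ref{lem:exist-unic} applied to the drift $\tilde{b}_\varepsilon$ to obtain a unique solution $x^{(\zeta),\varepsilon} \in \cac^{1-\alpha}(0,T)$, which is precisely the statement of the proposition.

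I do not expect any real obstacle: once one notes that the penalization $\varepsilon^{-1} f$ is merely a perturbation by a Lipschitz function, the result is an immediate application of the previous lemma. The one point worth flagging is that the Lipschitz constant of $\tilde{b}_\varepsilon$ degenerates as $\varepsilon \to 0$; this is irrelevant for the present statement (which concerns a fixed $\varepsilon$) but anticipates the fact that the $\varepsilon$-uniform estimates needed for the convergence claim in Theorem \ref{th:boconv} must be established by different arguments in the subsequent propositions.
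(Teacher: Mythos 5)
Your proposal is correct and is essentially identical to the paper's own proof: both absorb the penalization into the drift $\tilde{b}_\varepsilon(x)=b(x)+\varepsilon^{-1}f(x)$, note that this is Lipschitz since $f\in\cC^2_b$, and apply Lemma \ref{lem:exist-unic}. Your additional remark that the Lipschitz constant degenerates as $\varepsilon\to 0$ is a sensible observation but does not affect the argument for fixed $\varepsilon$.
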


\begin{proof}
If we consider $\tilde{b}(x)= b(x)+\frac{1}{\varepsilon}f(x)$, since $f$ is Lipschitz we can apply Lemma \ref{lem:exist-unic} and we obtain the desired result.
\end{proof}

\begin{proposition} \label{th:conv-eps}
Under the hypothesis of Theorem \ref{th:boconv},  the solutions $x^{(\zeta),\varepsilon}$ of \eqref{eq:eps} converge uniformly to a continuous function $G$ on $[0,T]$ as $\varepsilon\rightarrow 0$.
\end{proposition}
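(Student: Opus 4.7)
The plan is to obtain a pointwise monotone limit via comparison, then upgrade it to uniform convergence by proving the limit is continuous and invoking Dini's theorem.

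First, I would apply Lemma \ref{lem:comp1bis2} with the two drifts $b+\frac{1}{\varepsilon_1}f$ and $b+\frac{1}{\varepsilon_2}f$ for $\varepsilon_1<\varepsilon_2$: since $f\ge 0$ the former dominates, so $x^{(\zeta),\varepsilon_1}\ge x^{(\zeta),\varepsilon_2}$, and the family $\{x^{(\zeta),\varepsilon}\}$ is monotone increasing as $\varepsilon\downarrow 0$. An adaptation of the $\varphi_k$ argument of Lemma \ref{lem:comp1bis2} comparing $x^{(\zeta),\varepsilon}$ with $x^{(\zeta)}$ yields $x^{(\zeta),\varepsilon}\le x^{(\zeta)}$: on $\{x^{(\zeta),\varepsilon}>x^{(\zeta)}\}$ one has $x^{(\zeta),\varepsilon}>x^{(\zeta)}\ge 0$, so the penalty term $\frac{1}{\varepsilon}f(x^{(\zeta),\varepsilon})$ vanishes, while the regulator contributes $-dy^{(\zeta)}\le 0$. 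A matching lower bound (comparison with the un-reflected ODE) gives uniform boundedness. Hence $G_t:=\lim_{\varepsilon\downarrow 0}x^{(\zeta),\varepsilon}_t$ exists pointwise and is lower semicontinuous as a supremum of continuous functions. Setting $p^\varepsilon_t:=\eta_0+\zeta_t+\int_0^tb(x^{(\zeta),\varepsilon}_s)ds$ and $A^\varepsilon_t:=\frac{1}{\varepsilon}\int_0^tf(x^{(\zeta),\varepsilon}_s)ds$, so that $x^{(\zeta),\varepsilon}=p^\varepsilon+A^\varepsilon$ with $A^\varepsilon$ non-decreasing, the equation yields uniform bounds on $A^\varepsilon_T$ and on the $(1-\alpha)$-H\"older norm of $p^\varepsilon$, whence the one-sided estimate $x^{(\zeta),\varepsilon}_s-x^{(\zeta),\varepsilon}_t\le C(t-s)^{1-\alpha}$ for $s<t$, uniformly in $\varepsilon$.

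The main obstacle is proving $G$ continuous, which I would handle in three parts. (i) $G\ge 0$ on $[0,T]$: if $G(t_0)<0$, then $\int_0^Tf(x^{(\zeta),\varepsilon})ds=\varepsilon A^\varepsilon_T\to 0$ forces the component of $\{x^{(\zeta),\varepsilon_n}<G(t_0)/2\}$ around $t_0$ to have measure $O(\varepsilon_n)$, yet the one-sided H\"older estimate forces its length to be bounded below by a positive constant depending only on $|G(t_0)|$, a contradiction. (ii) Left-continuity of $G$: $p^G_t:=\eta_0+\zeta_t+\int_0^tb(G_s)ds$ is continuous by dominated convergence, and $A^G:=G-p^G$ is the pointwise limit of $A^\varepsilon$, hence non-decreasing and lsc; any non-decreasing lsc function is left-continuous, so $G$ is. (iii) Right-continuity of $G$: assume a right-jump $\Delta>0$ at $t_0$. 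If $G(t_0)>0$, the one-sided H\"older estimate keeps $x^{(\zeta),\varepsilon_n}>0$ on $[t_0,t_0+\delta_0]$ for some $\delta_0>0$ uniform in $n$, so $A^{\varepsilon_n}$ is constant there and the increment of $x^{(\zeta),\varepsilon_n}$ is bounded by $C\delta_0^{1-\alpha}$, contradicting $G(t_0+\delta_0)-G(t_0)\ge\Delta/2$ for $\delta_0$ small. If $G(t_0)=0$, set $\tau_n:=\sup\{t\in[t_0,t_0+\delta]:x^{(\zeta),\varepsilon_n}(t)\le 0\}$; then $x^{(\zeta),\varepsilon_n}(\tau_n)=0$ by continuity and $A^{\varepsilon_n}$ is constant on $(\tau_n,t_0+\delta]$, giving $x^{(\zeta),\varepsilon_n}(t_0+\delta)\le C\delta^{1-\alpha}$, which for $\delta$ small contradicts $x^{(\zeta),\varepsilon_n}(t_0+\delta)\to G(t_0+\delta)$ being close to $\Delta$ for $n$ large.

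Once $G$ is continuous, Dini's theorem applied to the monotone family $\{x^{(\zeta),\varepsilon}\}$ of continuous functions on the compact interval $[0,T]$ yields uniform convergence to $G$.
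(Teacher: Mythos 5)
Your proof is correct, and its skeleton --- comparison to get monotonicity in $\varepsilon$ and uniform boundedness, then a Dini-type upgrade from pointwise monotone to uniform convergence --- is the same as the paper's. There are, however, two genuine differences worth noting. First, for the upper bound the paper does not compare with the reflected solution $x^{(\zeta)}$ itself; it introduces an auxiliary equation $u_t=\eta_0+\zeta_t+\int_0^t b(u_s+\sup_{s'\leq s}u_{s'}^-)\,ds$ and compares $x^{(\zeta),\varepsilon}$ with $\tilde u_t=u_t+\sup_{s'\leq t}u_{s'}^-\ge 0$ via the same $\varphi_k$ device; your choice of $x^{(\zeta)}$ as the dominating function works equally well (the key points --- the penalty vanishes where $x^{(\zeta),\varepsilon}$ exceeds a nonnegative function, and the regulator enters with a favourable sign --- are identical), but it leans on the existence of the reflected solution from \cite{B-R}, whereas the paper's $\tilde u$ is built from scratch. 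Second, and more substantively, the paper disposes of the passage from pointwise monotone convergence to uniform convergence in one line, citing \cite{NP}; Dini's theorem needs the continuity of the limit $G$, and the paper does not establish it within this proof. Your decomposition $x^{(\zeta),\varepsilon}=p^\varepsilon+A^\varepsilon$ with $A^\varepsilon$ non-decreasing, the resulting one-sided estimate $x^{(\zeta),\varepsilon}_s-x^{(\zeta),\varepsilon}_t\le C(t-s)^{1-\alpha}$ uniform in $\varepsilon$, and the three-part argument ($G\ge 0$, left-continuity from lower semicontinuity of the non-decreasing part, right-continuity by the stopping-time argument at the boundary) supply exactly the content that the citation is standing in for, so your version is self-contained where the paper's is not. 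Two small points to polish: in the case $G(t_0)=0$ you should observe that the set defining $\tau_n$ is nonempty because $x^{(\zeta),\varepsilon_n}(t_0)\le G(t_0)=0$, and that $x^{(\zeta),\varepsilon_n}(\tau_n)\le 0$ suffices (equality may fail if $\tau_n=t_0+\delta$); and in part (i) you implicitly use that $f$ is non-increasing with $f(-\delta)>0$ to bound $f$ away from zero on $(-\infty,-\delta]$ --- worth stating, since the hypotheses on $f$ only give $0<f(x)\le x^-$ pointwise.
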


\begin{proof}
Following the method used in \cite{DM-P}, we will organize the proof in several steps.

\smallskip
\textit{Step 1.} We consider the following equation for $t\in [0,T]$,
\begin{equation}
v_t=\eta_0+ \zeta_t+\int_0^t b(v_s)ds. \label{eq:v_eps}   
\end{equation}
We want to prove that $v_t\leq x_t^{(\zeta),\varepsilon}$ for all $\varepsilon>0$ and $t\in[0,T]$.

\smallskip
\noindent
Using Lemma \ref{lem:exist-unic}, equation \eqref{eq:v_eps} has a unique solution. Moreover, since $b\leq \tilde{b}$ ($\tilde{b}(x)=b(x)+\frac{1}{\varepsilon}f(x)$ where $\frac{1}{\varepsilon}f(x)\geq 0$), we can apply Lemma \ref{lem:comp1bis2} and we obtain  the desired result.
\vskip 5pt
\noindent
\textit{Step 2.} Now we consider 
\begin{equation}
u_t=\eta_0+ \zeta_t+\int_0^t b\lp u_s+\sup_{s'\leq s} u_{s'}^-\rp ds. \label{eq:U}
\end{equation}
We also define $\tilde{u}_t=u_t+\sup_{s'\leq t}u_{s'}^-$ and $\phi_t=\sup_{s'\leq t}u_{s'}^-$ so we can write $\tilde{u}_t= u_t+\phi_t$.

We observe that $\tilde{u}_t\geq 0$ and $\phi_t$ is an increasing function. Following the same ideas in the proof of Lemma \ref{lem:exist-unic} we can prove that equation \eqref{eq:U} has a unique solution.

We want to prove that $x_t^{(\zeta),\varepsilon}\leq \tilde{u}_t$ for all $\varepsilon>0$. We  write $\tilde{\theta}_t^\varepsilon=x_t^{(\zeta),\varepsilon}-\tilde{u}_t$. 

We can use again the function $\varphi_k$ defined in the proof of Lemma \ref{lem:comp1bis2} and we have
\begin{eqnarray} \label{eq:tilde_Z}
\varphi_k(\tilde{\theta}_t^\varepsilon) &=& \int_0^t \varphi'_k(\tilde{\theta}_s^\varepsilon) \lp b(x_s^{(\zeta),\varepsilon})+\frac{1}{\varepsilon}f(x_s^{(\zeta),\varepsilon})-b(\tilde{u}_s)\rp ds - \int_0^t \varphi'_k (\tilde{\theta}_s^\varepsilon)d\phi_s \nonumber\\
 &=& \int_0^t \varphi'_k(\tilde{\theta}_s^\varepsilon) \lp b(x_s^{(\zeta),\varepsilon})-b(\tilde{u}_s)\rp ds +\frac{1}{\varepsilon}\int_0^t \varphi'_k(\tilde{\theta}_s^\varepsilon)f(x_s^{(\zeta),\varepsilon})ds- \int_0^t \varphi'_k (\tilde{\theta}_s^\varepsilon)d\phi_s \nonumber\\
&=& T_1^\varepsilon(k)+T_2^\varepsilon(k)+T_3^\varepsilon(k).
\end{eqnarray}
Now we observe that,
\begin{equation*}
0\leq T_2^\varepsilon(k)\leq \frac{1}{\varepsilon}\int_0^t 2(\tilde{\theta}_s^\varepsilon)^+ (x_s^{(\zeta),\varepsilon})^- ds=0.
\end{equation*}
In fact, if $(\tilde{\theta}_s^\varepsilon)^+>0$, then $\tilde{\theta}_s^\varepsilon=x_s^{(\zeta),\varepsilon} -\tilde{u}_s>0$. So, since we have proved that $\tilde{u}_t\geq 0$, it is necessary that $x_s^{(\zeta),\varepsilon}> 0$ and then $(x_s^{(\zeta),\varepsilon})^-=0$.
\vskip 3pt

On the other hand, $T_3^\varepsilon(k)\leq 0$, since $\varphi'_k(\tilde{\theta}_s^\varepsilon)\geq 0$ and $\phi_t$ is an increasing process.
Then,
\begin{eqnarray*}
0&\leq& T_1^\varepsilon(k) \leq \int_0^t 2 (\tilde{\theta}_s^\varepsilon)^+ |b(x_s^{(\zeta),\varepsilon})-b(\tilde{u}_s)|ds\\
&\leq& C\int_0^t (\tilde{\theta}_s^\varepsilon)^+ |x_s^{(\zeta),\varepsilon}-\tilde{u}_s|ds= C \int_0^t (\tilde{\theta}_s^\varepsilon)^+ |\tilde{\theta}_s^\varepsilon| ds = C \int_0^t \lp(\tilde{\theta}_s^\varepsilon)^+\rp^2 ds.
\end{eqnarray*}
Using the previous computations of $T_1^\varepsilon(k)$, $T_2^\varepsilon(k)$ and $T_3^\varepsilon(k)$ on \eqref{eq:tilde_Z} and for $k\rightarrow 0$ we have
\begin{equation*}
\lp(\tilde{\theta}_t^\varepsilon)^+\rp^2 \leq C \int_0^t \lp(\tilde{\theta}_s^\varepsilon)^+\rp^2 ds.
\end{equation*}
Now we have just to apply Gronwall's lemma and we obtain that $( \tilde{\theta}_t^\varepsilon)^+=0$. So $x_t^{(\zeta),\varepsilon}\leq \tilde{u}_t$  for all $\varepsilon$.

\smallskip
\textit{Step 3.} Since we have seen that $v_t\leq x_t^{(\zeta),\varepsilon}\leq \tilde{u}_t$ and $|\tilde{u}_t|\leq 2 \sup_{s\leq t} |u_s|$ we can state
\begin{equation*}
|x_t^{(\zeta),\varepsilon}|\leq |v_t|+2 \sup_{s\leq t} |u_s|.
\end{equation*}
\vskip 3pt
\noindent
So,  
\begin{equation*}
\sup_{\varepsilon} |x_t^{(\zeta),\varepsilon}|\leq |v_t|+2 \sup_{s\leq t} |u_s|.
\end{equation*}

\smallskip
\textit{Step 4.} We can use Lemma \ref{lem:comp1bis2} to see that $x_t^{(\zeta),\varepsilon}$ is an increasing sequence when $\varepsilon$ decreases. Using that we have proved that it is bounded for all $\varepsilon>0$ we can define $G_t:=\lim_{\varepsilon\rightarrow 0} x_t^{(\zeta),\varepsilon}$. Since the sequence $\{x^{(\zeta),\varepsilon}\}_\varepsilon$ is increasing, the converge is uniform in $[0,T]$ (see \cite{NP}). That is, $\sup_t |x_t^{(\zeta),\varepsilon} -G_t|\rightarrow 0$ when $\varepsilon\rightarrow 0$, and so $G$ is a continuous function.
\end{proof}

\begin{proposition} \label{prop:G_sol}
If $G$ is the function appearing in Proposition \ref{th:conv-eps}, then $G_t$ is the solution of the equation \eqref{eq:det} and $G \in \cac^{1-\alpha} (0,T).$
\end{proposition}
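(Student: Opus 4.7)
My plan is to set $y^\varepsilon_t := \int_0^t \tfrac{1}{\varepsilon} f(x^{(\zeta),\varepsilon}_s)\,ds$, so that \eqref{eq:eps} rewrites as
\begin{equation*}
x^{(\zeta),\varepsilon}_t = \eta_0 + \zeta_t + \int_0^t b(x^{(\zeta),\varepsilon}_s)\,ds + y^\varepsilon_t,
\end{equation*}
and then pass to the limit $\varepsilon\downarrow 0$ in order to identify $G$ as the solution of the Skorokhod problem for $h_t := \eta_0+\zeta_t+\int_0^t b(G_s)\,ds$. The uniform convergence $x^{(\zeta),\varepsilon}\to G$ from Proposition \ref{th:conv-eps} together with the Lipschitz property of $b$ gives $\int_0^\cdot b(x^{(\zeta),\varepsilon}_s)\,ds\to\int_0^\cdot b(G_s)\,ds$ uniformly, which forces $y^\varepsilon\to y$ uniformly where $y_t := G_t-\eta_0-\zeta_t-\int_0^t b(G_s)\,ds$. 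Each $y^\varepsilon$ is continuous and non-decreasing (since $f\ge 0$) with $y^\varepsilon_0=0$, so $y$ is continuous, non-decreasing, with $y_0=0$; moreover, the uniform bound $\sup_\varepsilon\|x^{(\zeta),\varepsilon}\|_\infty\le M$ from Step 3 of the proof of Proposition \ref{th:conv-eps} directly yields $\sup_\varepsilon y^\varepsilon_T<\infty$ through the above identity at time $T$.

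Next I would verify the remaining two Skorokhod conditions. For $G\ge 0$, suppose by contradiction that $G_{t_0}<0$ at some $t_0$; by continuity $G\le -\delta$ on an interval $I$ of positive length $\ell$ for some $\delta>0$. Since $\{x^{(\zeta),\varepsilon}\}_\varepsilon$ is monotone increasing as $\varepsilon\downarrow 0$ with limit $G$, we have $x^{(\zeta),\varepsilon}\le G\le-\delta$ on $I$; combined with $|x^{(\zeta),\varepsilon}|\le M$ uniformly and with $f$ continuous and strictly positive on $(-\infty,0)$, this gives $f(x^{(\zeta),\varepsilon}_s)\ge c:=\inf_{[-M,-\delta]} f>0$ on $I$, hence $y^\varepsilon_T\ge c\ell/\varepsilon\to\infty$, contradicting the previous step. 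For the minimality $\int_0^T G_s\,dy_s=0$, observe that $xf(x)\le 0$ for every $x\in\R$ (vanishing on $\R_+$ and non-positive on $\R_-$), so
\begin{equation*}
\int_0^T x^{(\zeta),\varepsilon}_s\,dy^\varepsilon_s = \frac{1}{\varepsilon}\int_0^T x^{(\zeta),\varepsilon}_s f(x^{(\zeta),\varepsilon}_s)\,ds \le 0 .
\end{equation*}
Writing $x^{(\zeta),\varepsilon}=G+r^\varepsilon$ with $\|r^\varepsilon\|_\infty\to 0$, one has $|\int_0^T r^\varepsilon_s\,dy^\varepsilon_s|\le\|r^\varepsilon\|_\infty y^\varepsilon_T\to 0$, and $\int_0^T G_s\,dy^\varepsilon_s\to\int_0^T G_s\,dy_s$, so the displayed quantity converges to $\int_0^T G_s\,dy_s\ge 0$ (recall $G\ge 0$ and $y$ non-decreasing); the two-sided bound forces the limit to be $0$.

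With the three conditions of Definition \ref{def:skorokhod} in hand for the pair $(G,y)$ with reflector $h$, uniqueness of the Skorokhod map gives $y_t=\max_{s\le t} h_s^-$, and therefore $G=h+y$ solves \eqref{eq:det}. For the Hölder regularity, $h\in\cac^{1-\alpha}(0,T)$ because $\zeta\in\cac^{1-\alpha}(0,T)$ and $t\mapsto\int_0^t b(G_s)\,ds$ is Lipschitz (hence in $\cac^1\subset\cac^{1-\alpha}$); the map $\psi\mapsto(\max_{s\le\cdot}\psi_s^-)$ preserves $(1-\alpha)$-Hölder continuity, so $y\in\cac^{1-\alpha}(0,T)$ and $G=h+y\in\cac^{1-\alpha}(0,T)$.

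The main obstacle I foresee is the Stieltjes-type convergence $\int_0^T G_s\,dy^\varepsilon_s\to\int_0^T G_s\,dy_s$, since $G$ is only continuous and a priori not of bounded variation, so integration by parts is not directly available. The remedy is to exploit that the $y^\varepsilon$ are non-decreasing with uniformly bounded total variation $y^\varepsilon_T$, which lets one approximate the uniformly continuous function $G$ on $[0,T]$ by piecewise constant functions (for which the statement is elementary from $y^\varepsilon\to y$ uniformly) with uniformly small sup-norm error, and then close the estimate.
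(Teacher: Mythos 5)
Your proof is correct, and it follows the same overall skeleton as the paper's: identify the candidate regulator, verify the three conditions of Definition \ref{def:skorokhod} for the pair $(G,y)$ with reflector $z^{(\zeta)}_t=\eta_0+\zeta_t+\int_0^t b(G_s)\,ds$, and conclude by uniqueness. The individual verifications, however, are genuinely different. The paper defines $y^{(\zeta)}$ as a monotone limit of $\frac{1}{\varepsilon}\int_0^{\cdot}f(x^{(\zeta),\varepsilon}_s)\,ds$, whereas you read off the uniform convergence of $y^{\varepsilon}$ directly from the equation; this is cleaner and sidesteps having to justify monotonicity of $\frac{1}{\varepsilon}f(x^{(\zeta),\varepsilon})$ in $\varepsilon$. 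For $G\ge 0$, the paper multiplies the equation by $\varepsilon$ and uses monotone convergence to get $\int_0^t f(G_s)\,ds=0$; your contradiction argument via the uniform bound on $y^{\varepsilon}_T$ is an equally valid alternative (it uses that $f>0$ on $(-\infty,0)$ and $x^{(\zeta),\varepsilon}\le G$, both available). For the minimality condition, the paper runs a support-of-measures argument (${\rm supp}\,\mu\subseteq{\rm supp}\,\mu_{\varepsilon}$), while you use the pointwise inequality $xf(x)\le 0$ together with weak convergence of the measures $dy^{\varepsilon}$ to $dy$; your route is more elementary and avoids the slightly delicate support inclusion, at the cost of the Stieltjes-convergence step, which you correctly identify and resolve via the uniform mass bound and approximation by step functions. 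Finally, for the H\"older regularity the paper cites the estimate $\Vert G\Vert_{1-\alpha(0,T)}\le 2\Vert z^{(\zeta)}\Vert_{1-\alpha(0,T)}$ from \cite{B-R}, whereas you rederive it from the Lipschitz property of the map $\psi\mapsto\max_{s\le\cdot}\psi_s^-$ with respect to oscillations on subintervals; both give the same conclusion, and yours is self-contained. One cosmetic point: at the very end you should state explicitly that, having shown $G$ is \emph{a} solution of \eqref{eq:det}, the already-established uniqueness for \eqref{eq:det} identifies $G$ with $x^{(\zeta)}$ — this is how the paper phrases the conclusion.
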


\begin{proof}
We will prove that $G$ is a solution of our equation. So, by uniqueness we will obtain the desired result.
Then, we have to check that we have a pair $(G_t, y_t^{(\zeta)})\in \cac_+(\R_+,\R)$ that solves the Skorohod problem with reflection for $z^{(\zeta)}$ a path in $\cC_+(\R_+,\R)$. By Definition \ref{def:skorokhod} we have to prove the following facts
\begin{itemize}
\item[(i)] $G_t=z_t^{(\zeta)}+y_t^{(\zeta)}$ and $G_t\ge 0$, for each $t\ge 0$.
\item[(ii)] $y_0^{(\zeta)}=0$ and $y^{(\zeta)}$ is non-decreasing.
\item[(iii)] For all $t\in [0,T]$, $\int_0^t G_s dy^{(\zeta)}_s=0$.
\end{itemize}

\smallskip
\textit{Step 1}: Define, using monotone convergence theorem,
\begin{eqnarray}
y_t^{(\zeta)}&=&\lim_{\varepsilon\rightarrow 0}\frac{1}{\varepsilon}\int_0^t f\lp x_s^{(\zeta),\varepsilon}\rp ds,\nonumber\\
z_t^{(\zeta)}&=&\eta_0+\zeta_t+\lim_{\varepsilon\rightarrow 0} \int_0^t b(x_s^{(\zeta),\varepsilon})ds=\eta_0+\zeta_t+\int_0^t b(G_s)ds.\label{zzeta}
\end{eqnarray}
So, it only remains to check that $G_t\geq 0$, $\forall t\in [0,T]$.

\smallskip
Since
\[x_t^{(\zeta),\varepsilon}-\eta_0-\zeta_t-\int_0^t b(x_s^{(\zeta),\varepsilon})ds =\frac{1}{\varepsilon}\int_0^t f\lp x_s^{(\zeta),\varepsilon}\rp ds\]
and
\[\lim_{\varepsilon\rightarrow 0}\left[\varepsilon x_t^{(\zeta),\varepsilon}-\varepsilon \eta_0-\varepsilon \zeta_t-\varepsilon\int_0^t b(x_s^{(\zeta),\varepsilon})ds\right]= 0,\quad \forall\,t\in [0,T],\]
we have  
\[\lim_{\varepsilon\rightarrow 0}\int_0^t f \lp x_s^{(\zeta),\varepsilon}\rp ds= 0,\quad \forall\,t\in [0,T].\]
The monotone convergence theorem implies that 
$\int_0^t f\lp G_s\rp ds=0,$ $\forall\,t\in [0,T]$. Then, $G_s^-=0$ and $G_s\geq 0$.
\vskip 3pt
\noindent
\textit{Step 2}:
Obviously $y_0^{(\zeta)}=0$. Moreover,  $y^{(\zeta)}$ is non-decreasing since $f \ge 0.$

\smallskip
\textit{Step 3}:
With the definition of $y_t^{(\zeta)}$ in mind, we define the following two measures on $[0,T]$
\begin{displaymath}
\mu_\varepsilon(A)=\int_A\frac{1}{\varepsilon}f \lp x_s^{(\zeta),\varepsilon}\rp ds,\qquad \qquad 
\mu(A)=\int_A dy_s^{(\zeta)},\qquad \quad A\in \mathcal{B}([0,T]).\end{displaymath}
We have that $\mu_\varepsilon$ converges weakly to $\mu$, this means that
$$\lim_{\varepsilon\downarrow 0}\mu_\varepsilon([0,t]) =\mu([0,t]),\qquad \forall\,t\ge 0.$$
Observe that if $\varepsilon\le \varepsilon'$, then $x^{(\zeta),\varepsilon}_t\ge  x^{(\zeta),\varepsilon'}_t$ for all  $t \in [0,T]$, and $f(x^{(\zeta),\varepsilon}_t)\le  f(x^{(\zeta),\varepsilon'}_t),$ for all $t\in [0,T]$.
It implies that ${\rm supp}\ \mu_\varepsilon\subseteq \ {\rm supp} \mu_{\varepsilon'}$, and so, ${\rm supp}\ \mu\subseteq {\rm supp}\ \mu_{\varepsilon}$, for any $\varepsilon>0$.

\smallskip
So, if $s\in {\rm supp}\ \mu$, then $f(x^{(\zeta),\varepsilon}_s)\ge 0$ and it satisfies that $x^{(\zeta),\varepsilon}_s\le 0$. So, we get, for any $t \in [0,T]$,
\begin{displaymath}
\int_0^t x^{(\zeta),\varepsilon}_s dy_s^{(\zeta)}=\int_0^t x^{(\zeta),\varepsilon}_s d\mu (s) \le 0.
\end{displaymath}
Finally, by the monotone convergence theorem
\begin{displaymath}
\int_0^t G_s dy_s^{(\zeta)} \le 0.
\end{displaymath}
The condition $G\ge 0$ implies that 
\begin{displaymath}
\int_0^t G_s dy_s^{(\zeta)} =0.
\end{displaymath}
\medskip
\textit{Step 4}: To finish the proof let us check that $G \in \cac^{1-\alpha} (0,T).$ From expression (\ref{zzeta}) it is clear that $z^{\zeta}$ belongs to $\cac^{1-\alpha} (0,T)$ and, on the other hand, in the proof of  Proposition 4.2 in \cite{B-R} we have checked that  $\Vert G \Vert_{1-\alpha (0,T)} \le 2 \Vert z^{(\zeta)} \Vert_{1-\alpha (0,T)}$.
\end{proof}


\renewcommand{\theequation}{5.\arabic{equation}}
\setcounter{equation}{0}

\section{Differentiability}\label{secdif}

The aim of this section is to study the Fr\'echet differentiability in the direction $h\in W_1^{1-\alpha}(0,T)$ of the solution to equation
	\begin{eqnarray} 
&&x_t^g=\eta_0+\int_0^t b(x_s^g)ds+\int_0^t \sigma(\bar x^g_{s-r})dg_s, \qquad t\in[0,T],\nonumber\\
&&x_t^g=\eta_t,\qquad t\in[-r,0] \label{eq:detsig}
\end{eqnarray}
where $\bar x^g$ is Fr\'echet differentiable. We follow the ideas presented in \cite{N-S}. The main result of this section is the following:

\begin{theorem} \label{prop:derxf}
Let $\bar x$ be a map $g \mapsto \bar x^g$ from $W_2^{1-\alpha}(0,T)$ to $W_1^{\alpha}(-r,T-r),$ continuously Fr\'echet differentiable in the direction $h\in W_2^{1-\alpha}(0,T)$.
	Assume $b,\sigma\in \cac^2_b$ and set $x^g$ the solution to equation (\ref{eq:detsig}).
Then the mapping $$g \in W_2^{1-\alpha}(0,T)\mapsto x^g \in W_1^{\alpha}(0,T)$$ is Fr\'echet differentiable in the direction $h\in W_2^{1-\alpha}(0,T)$ and it holds that
	\begin{equation*}
	D_h x_t^g=   \int_0^t   b'(x_s^g)   D_h x^g_{s}  ds + \int_0^t \sigma (\bar x^g_{s-r})  dh_s + \int_0^t \sigma' (\bar x^g_{s-r}) D_h \bar x^g_{s-r}  dg_s.
	\end{equation*}
\end{theorem}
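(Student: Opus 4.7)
The plan is to adapt the Gâteaux-to-Fréchet strategy of Nualart--Saussereau \cite{N-S} to the delay setting, exploiting the standing assumption that $\bar x$ is already continuously Fréchet differentiable. First, I would introduce the candidate derivative $\xi$ as the unique solution in $W_1^{\al}(0,T)$ of the linear Young equation
\begin{equation*}
\xi_t = \int_0^t b'(x_s^g)\,\xi_s\,ds + \int_0^t \si(\bar x^g_{s-r})\,dh_s + \int_0^t \si'(\bar x^g_{s-r})\,D_h \bar x^g_{s-r}\,dg_s.
\end{equation*}
Existence and uniqueness follow from a Picard iteration on short subintervals combined with the pathwise Young integral estimates from \cite{N-R}: the coefficients $b'(x^g)$, $\si(\bar x^g_{\cdot-r})$ and $\si'(\bar x^g_{\cdot-r})D_h\bar x^g_{\cdot-r}$ are bounded in the relevant Hölder/$W_1^{\al}$ norms (using $b,\si\in\cac^2_b$, $x^g\in W_1^{\al}$, and the continuous differentiability of $\bar x$), so the linear map is a contraction on $W_1^{\al}(0,T_1)$ for $T_1$ small and the solution is then extended to $[0,T]$.

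Next, set $R_t^{\ep}:=\ep^{-1}(x_t^{g+\ep h}-x_t^g)-\xi_t$. Subtracting the equations for $x^{g+\ep h}$, $x^g$ and $\xi$ and applying first-order Taylor expansion to $b$ and $\si$ writes $R^{\ep}$ as the solution of a perturbed linear equation of the form
\begin{equation*}
R_t^{\ep} = \int_0^t A_s^{\ep} R_s^{\ep}\,ds + \int_0^t B_s^{\ep}\,dg_s + \int_0^t (\si(\bar x^{g+\ep h}_{s-r})-\si(\bar x^g_{s-r}))\,dh_s + I_t^{\ep},
\end{equation*}
where $A_s^{\ep}$ and $B_s^{\ep}$ involve the integrated Taylor means of $b'$ and $\si'$ along the segment from $x^g$ (resp.\ $\bar x^g$) to $x^{g+\ep h}$ (resp.\ $\bar x^{g+\ep h}$), and $I_t^{\ep}$ collects the second-order remainders. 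To show $R^{\ep}\to 0$ in $W_1^{\al}(0,T)$, I would first establish uniform bounds of $x^{g+\ep h}$ in $W_1^{\al}$ and continuity $x^{g+\ep h}\to x^g$ in $W_1^{\al}$ as $\ep\to 0$ (analogous to Proposition 4.2 of \cite{B-R}), then use the quadratic Taylor bounds $|b(y)-b(x)-b'(x)(y-x)|\leq\tfrac12\|b''\|_\infty(y-x)^2$ and similarly for $\si$, together with the hypothesis that $g\mapsto\bar x^g$ is continuously Fréchet differentiable, to show $\|I^{\ep}\|_{\al,1(0,T)}\to 0$ and $\|\si(\bar x^{g+\ep h}_{\cdot-r})-\si(\bar x^g_{\cdot-r})\|_{\al,1}\to 0$. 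Applying the Gronwall-type lemma for linear Young equations (Lemma 7.6 style in \cite{N-R}) to the equation for $R^{\ep}$ then yields $\|R^{\ep}\|_{\al,1(0,T)}\to 0$, which is Fréchet differentiability in the direction $h$.

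The main obstacle I anticipate is controlling the pathwise Stieltjes term $\int_0^t\bigl[\si(\bar x^{g+\ep h}_{s-r})-\si(\bar x^g_{s-r})-\ep\,\si'(\bar x^g_{s-r})D_h\bar x^g_{s-r}\bigr]dg_s$ in the $W_1^{\al}$ norm, not merely in supremum norm. The Young integral is sensitive to the full fractional modulus of the integrand, so one must combine a second-order Taylor expansion of $\si$, the chain-rule application via the Fréchet differentiability of $\bar x$ on $W_2^{1-\al}\to W_1^{\al}$, and the uniform $W_1^{\al}$ estimates on $x^{g+\ep h}-x^g$ to bound the fractional Sobolev seminorm of this remainder. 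Once this estimate is in place, continuity of the derivative map $g\mapsto D_h x^g$ from $W_2^{1-\al}(0,T)$ to $W_1^{\al}(0,T)$ is obtained by an analogous perturbation argument applied to the linear defining equation for $\xi$.
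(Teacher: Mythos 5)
Your proposal is correct in outline, but it takes a genuinely different route from the paper. The paper's proof is indirect: it factors the equation through the operator $F(g,x)=x-\eta_0-\int_0^{\cdot}b(x_s)\,ds-\zeta^g_{\cdot}$ with $\zeta^g=\int_0^{\cdot}\sigma(\bar x^g_{s-r})\,dg_s$, proves Fr\'echet differentiability of $\zeta^g$ by a chain rule applied to the two-variable map $H(g,x)=\int_0^{\cdot}\sigma(x_{s-r})\,dg_s$ (Proposition \ref{prop:sigma}, following Lemma 3 of \cite{N-S}), checks that $F$ is $C^1$ with $D_{2,\cdot}F$ a bounded invertible linear operator (Lemma \ref{lema3}), and then reads off $D_hx^g=-D_{2,\cdot}F^{-1}\circ D_{1,h}F$ from the implicit function theorem (Proposition \ref{frechet}). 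You instead compute the difference quotient directly: you posit the linearized equation for $\xi$, form $R^{\varepsilon}=\varepsilon^{-1}(x^{g+\varepsilon h}-x^g)-\xi$, and drive it to zero via Taylor expansion and Gronwall. Both work; the implicit-function route localizes all the hard analysis into the single statement that $(g,x)\mapsto\int_0^{\cdot}\sigma(x_{s-r})\,dg_s$ is $C^1$ between the fractional Sobolev spaces (after which the inversion of $D_{2,\cdot}F$ is an elementary linear Volterra equation), whereas your route gives more explicit quantitative control of the remainder but forces you to estimate the Young-integral Taylor remainder $\int_0^t[\sigma(\bar x^{g+\varepsilon h}_{s-r})-\sigma(\bar x^g_{s-r})-\varepsilon\,\sigma'(\bar x^g_{s-r})D_h\bar x^g_{s-r}]\,dg_s$ in the full $\Vert\cdot\Vert_{\alpha,1}$ norm by hand --- exactly the step you flag as the main obstacle, and exactly the content that the paper delegates to the cited Lemma 3 of \cite{N-S}. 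Two small simplifications you could make: since $\bar x^g$ is an externally given differentiable map, the equation for $\xi$ (and for $R^{\varepsilon}$) is linear only through the Lebesgue drift term, so an ordinary variation-of-constants formula and classical Gronwall suffice --- no Young-equation contraction or Young--Gronwall lemma is needed there.
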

\begin{proof} 
It follows easily combining Propositions \ref{prop:sigma} and \ref{frechet}.	
\end{proof}

We begin studying the differentiability of the integral.

\begin{proposition} \label{prop:sigma}
Let $x$ be a map $g \mapsto \bar x^g$ from $W_2^{1-\alpha}(-r,T-r)$ to $W_1^{\alpha}(-r,T),$ continuously Fr\'echet differentiable in the direction $h\in W_2^{1-\alpha}(0,T)$. Let $\sigma\in\cac^1_b$. Then the mapping 
\[F: W_2^{1-\alpha}(0,T)\mapsto W_1^{\alpha}(0,T),\]
defined by
\[ F(g)_\cdot:=\int_0^\cdot \sigma (\bar x_{s-r}^g) dg_s\]
is Fr\'echet differentiable  in the direction $h\in W_2^{1-\alpha}(0,T)$ with directional derivative
\[D_h F(g)_t=\int_0^t \sigma (\bar x^g_{s-r})  dh_s + \int_0^t \sigma' (\bar x^g_{s-r}) D_h \bar x^g_{s-r}  dg_s.\]
\end{proposition}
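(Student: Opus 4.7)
The plan is to fix $g, h \in W_2^{1-\alpha}(0,T)$ and start from the identity
$$F(g+\lambda h) - F(g) = \int_0^\cdot \bigl[\sigma(\bar x^{g+\lambda h}_{s-r}) - \sigma(\bar x^g_{s-r})\bigr]\, dg_s + \lambda \int_0^\cdot \sigma(\bar x^{g+\lambda h}_{s-r})\, dh_s,$$
obtained by linearity of the pathwise Young integral in its integrator. Dividing by $\lambda$ and passing to the limit $\lambda \to 0$ in the $W_1^\alpha(0,T)$-norm should recover the claimed formula for $D_h F(g)$.

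For the second term, continuous Fréchet differentiability of $\bar x$ makes the map $g \mapsto \bar x^g$ locally Lipschitz, so $\bar x^{g+\lambda h} \to \bar x^g$ in the target norm as $\lambda \to 0$; combined with $\sigma \in \cac^1_b$ this forces $\sigma(\bar x^{g+\lambda h}_{\cdot - r}) \to \sigma(\bar x^g_{\cdot - r})$ in $W_1^\alpha$. Applying the standard bound on the Young integral in the $W_1^\alpha$-norm, derived from the fractional integration by parts formula \eqref{eq:forpart} and of the form
$$\Bigl\Vert \int_0^\cdot f_s\, dh_s \Bigr\Vert_{\alpha,1(0,T)} \le C\, \|f\|_{\alpha,1(0,T)}\, \|h\|_{1-\alpha,2},$$
I obtain convergence of $\int_0^\cdot \sigma(\bar x^{g+\lambda h}_{s-r})\, dh_s$ to $\int_0^\cdot \sigma(\bar x^g_{s-r})\, dh_s$ in $W_1^\alpha(0,T)$.

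For the first term I would perform a first-order Taylor expansion
$$\sigma(\bar x^{g+\lambda h}_{s-r}) - \sigma(\bar x^g_{s-r}) = \sigma'(\bar x^g_{s-r})\bigl(\bar x^{g+\lambda h}_{s-r} - \bar x^g_{s-r}\bigr) + R^\lambda_s,$$
whose remainder $R^\lambda$ is of size $o\bigl(\bar x^{g+\lambda h}_{s-r} - \bar x^g_{s-r}\bigr)$ thanks to uniform continuity of $\sigma'$, and then use Fréchet differentiability of $\bar x$ to write $\bar x^{g+\lambda h}_{s-r} - \bar x^g_{s-r} = \lambda D_h \bar x^g_{s-r} + \rho^\lambda_{s-r}$ with $\lambda^{-1} \rho^\lambda \to 0$ in $W_1^\alpha(-r,T-r)$. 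Substituting, dividing by $\lambda$, and integrating against $dg$, the leading term produces exactly $\int_0^\cdot \sigma'(\bar x^g_{s-r}) D_h \bar x^g_{s-r}\, dg_s$, while the remainders vanish after one more application of the $W_1^\alpha$-Young bound.

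The main obstacle will be controlling $R^\lambda$ and $\rho^\lambda$ in the full $W_1^\alpha$-norm rather than merely in supremum. The non-local piece $\int_a^u |f(u)-f(v)|(u-v)^{-\alpha-1}\, dv$ of that norm forces estimates on the increments of $\sigma' \circ \bar x^g - \sigma' \circ \bar x^{g+\lambda h}$, which in turn require a uniform-in-$\lambda$ bound on $\|\bar x^{g+\lambda h}\|_{\alpha,1}$, available from continuous Fréchet differentiability, together with the modulus of continuity of $\sigma'$. Once these uniform estimates are combined with the Young-integral bound recalled above, the remainders become $o(\lambda)$ in $W_1^\alpha(0,T)$ and the claimed formula for $D_h F(g)$ follows.
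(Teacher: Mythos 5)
Your proposal is correct and follows essentially the same route as the paper: the paper simply packages your two-term decomposition as the chain rule applied to the auxiliary map $H(g,x)=\int_0^\cdot \sigma(x_{s-r})\,dg_s$, whose partial Fr\'echet derivatives $D_{1,h}H$ (your integrator-variation term) and $D_{2,w}H$ (your Taylor-expansion term) it takes from Lemma 3 of \cite{N-S}, composed with $x=\bar x^g$. Your sketch just unrolls that cited lemma and the chain rule by hand, correctly identifying the Young-integral bound in the $\Vert\cdot\Vert_{\alpha,1}$-norm as the key estimate for the remainders.
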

\begin{proof}  Let us consider the mapping 
\[H: W_2^{1-\alpha}(0,T) \times  W_1^{\alpha}(-r,T-r) \mapsto W_1^{\alpha}(0,T),\]
defined by
\[ H(g,x):=\int_0^\cdot \sigma ( x_{s-r}) dg_s.\]
Following the same computations as in  Lemma 3 in \cite{N-S} we can check that $H$ is  
Fr\'echet differentiable with directional derivatives:
\begin{itemize}
\item for any $h\in W_2^{1-\alpha}(0,T)$
\[D_{1,h} H(g,x)_t=\int_0^t \sigma ( x_{s-r})  dh_s,\]
\item  for any $w\in W_1^{\alpha}(-r,T-r)$
\[D_{2,w} H(g,x)_t=\int_0^t \sigma' ( x_{s-r}) w_{s-r}  dg_s.\]
\end{itemize}

Then, since $F(g)=H(g,\bar x^g)$, using the chain rule we get that, for  $h\in W_2^{1-\alpha}(0,T)$
\begin{eqnarray*}
D_h F(g)_t &=& D_{1,h} H(g,\bar x^g)_t + D_{2,D_h \bar x^g} H(g,\bar x^g)_t \\
& =& \int_0^t \sigma (\bar x^g_{s-r})  dh_s + \int_0^t \sigma' (\bar x^g_{s-r}) D_h \bar x^g_{s-r}  dg_s.
\end{eqnarray*}
\end{proof}

We need a technical lemma before the last proposition.

\begin{lemma}\label{lema3}
Assume that $\eta_0 \in \R$ and that $b$ belongs to $C^2_b$ and that the mapping
\[\zeta: W_2^{1-\alpha}(0,T)\mapsto W_1^{\alpha}(0,T)\]
is Fr\'echet differentiable  in the direction $h\in W_2^{1-\alpha}(0,T)$.
Then the mapping 
$$F:W_2^{1-\alpha}(0,T) \times W_1^{\alpha}(0,T)\rightarrow W_1^{\alpha}(0,T)$$
defined by
\begin{equation*}
(g,x) \mapsto F(g,x):=x-\eta_0-\int_0^{\cdot} b(x_s) ds-\zeta^{g}_\cdot
\end{equation*}
is Fr\'echet differentiable in the direction $h \in W_2^{1-\alpha}(0,T)$. Moreover, for $h \in W_2^{1-\alpha}(0,T)$, $v\in W_1^{\alpha}(0,T)$ and $(g,x)\in W_2^{1-\alpha}(0,T) \times W_1^{\alpha}(0,T)$, the Fr\'echet derivatives with respect to $h$ and $v$ are given respectively by
\begin{align} \label{der1}
D_{1,h} F(g,x)_t&=-D_h \zeta_t^g, \\\label{der2}
D_{2,v} F(g,x)_t&=v_t- \int_0^t  b'(x_s) v_s ds.
\end{align}
\end{lemma}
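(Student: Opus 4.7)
The plan is to decompose $F$ into the sum of three maps and show that each piece has the claimed Fréchet derivative in the appropriate direction, since Fréchet differentiability is additive. Write
\[
F(g,x) = F_1(x) - \eta_0 - F_2(x) - F_3(g),
\]
with $F_1(x) = x$, $F_2(x)_\cdot = \int_0^\cdot b(x_s)\,ds$, and $F_3(g) = \zeta^g_\cdot$. The identity map $F_1$ is linear and continuous from $W_1^\alpha(0,T)$ into itself, so $D_{2,v}F_1(x) = v$. The constant term $\eta_0$ contributes nothing. By hypothesis $g \mapsto \zeta^g$ is Fréchet differentiable in the direction $h$, giving immediately $D_{1,h} F_3(g)_t = D_h\zeta_t^g$. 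This accounts for \eqref{der1} and the first summand of \eqref{der2}, and reduces everything to analyzing $F_2$.

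For $F_2$, using $b \in \cac^2_b$ and Taylor's theorem with integral remainder, for any $v \in W_1^\alpha(0,T)$ and $s \in [0,T]$,
\[
b(x_s + v_s) - b(x_s) - b'(x_s) v_s = R_s(x,v),
\qquad |R_s(x,v)| \le \tfrac{1}{2}\|b''\|_\infty v_s^2.
\]
The candidate derivative is $v \mapsto \int_0^\cdot b'(x_s) v_s\,ds$, which is linear and continuous in $v$ (since $b'$ is bounded and integration smooths). I therefore need to estimate
\[
\mathcal{E}(t) := \int_0^t R_s(x,v)\,ds
\]
in the $W_1^\alpha(0,T)$ norm and show $\|\mathcal{E}\|_{\alpha,1(0,T)} = o(\|v\|_{\alpha,1(0,T)})$ as $\|v\|_{\alpha,1(0,T)} \to 0$. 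The two pieces of the norm are controlled as follows: the supremum part satisfies $|\mathcal{E}(t)| \le \tfrac{1}{2}\|b''\|_\infty T \|v\|_{\infty(0,T)}^2$, and for the integral part,
\[
\int_0^u \frac{|\mathcal{E}(u) - \mathcal{E}(v)|}{(u-v)^{\alpha+1}}\,dv
 \le \tfrac{1}{2}\|b''\|_\infty \|v\|_{\infty(0,T)}^2 \int_0^u \frac{u-v}{(u-v)^{\alpha+1}}\,dv
 \le C_{T,\alpha}\|v\|_{\infty(0,T)}^2.
\]
Combining and using the trivial embedding $\|v\|_{\infty(0,T)} \le \|v\|_{\alpha,1(0,T)}$, we conclude $\|\mathcal{E}\|_{\alpha,1(0,T)} = O(\|v\|_{\alpha,1(0,T)}^2)$, which is the desired $o(\|v\|_{\alpha,1(0,T)})$.

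It remains to observe that the candidate derivatives are continuous in $(g,x)$: for $D_{2,v}F(g,x)$ this follows from boundedness of $b'$ together with the elementary bound $\|\int_0^\cdot b'(x_s) v_s\,ds\|_{\alpha,1(0,T)} \lesssim \|b'\|_\infty \|v\|_{\alpha,1(0,T)}$; for $D_{1,h}F(g,x)$ it follows from the \emph{continuous} Fréchet differentiability of $\zeta$ assumed in the statement. This completes the verification of \eqref{der1}--\eqref{der2}. The main obstacle I anticipate is purely bookkeeping: verifying that $\mathcal{E}$ indeed lies in $W_1^\alpha(0,T)$ (not just $C(0,T)$) with the stated smallness, which is handled by the elementary splitting above; no subtle cancellation or delicate estimate is required, since the $b$-integral already has a regularizing effect of one degree in time, comfortably absorbing the factor $(u-v)^{\alpha+1}$ in the Riesz-type integral of the $W_1^\alpha$ seminorm.
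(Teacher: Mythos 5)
Your proof is correct and follows essentially the same route as the paper: identify the candidate derivative, check it is a bounded linear operator, and show the linearization error of the drift term is $O(\Vert v\Vert_{\alpha,1}^2)$ using the boundedness of $b''$ together with the regularizing effect of the time integral on the $W_1^{\alpha}$ seminorm. The only difference is that where the paper invokes the mean value theorem plus a cited estimate (Proposition 2.2 of the authors' Volterra paper) to bound $\Vert\int_0^{\cdot}(b(x_s)-b(x_s+v_s)+b'(x_s)v_s)\,ds\Vert_{\alpha,1}$, you carry out that elementary estimate explicitly via Taylor's theorem.
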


\begin{proof}
	For $(h,x)$ and $(\tilde{h}, \tilde{x})$ in $W_2^{1-\alpha}(0,T) \times W_1^{\alpha}(0,T)$ we have
	$$
	F(h,x)_t-F(\tilde{h}, \tilde{x})_t=x_t-\tilde{x}_t-
	\int_0^t (b(x_s)-b(\tilde{x}_s)) ds -(\zeta_t^{h}-\zeta_t^{\tilde{h}}).
	$$
	Using  \cite[Proposition 2.2]{BR}, we get that
	\begin{equation*}
	\begin{split}
	\bigg\Vert x-\tilde{x}-
	\int_0^{\cdot} (b( x_s)-b(\tilde{x}_s)) ds \bigg\Vert_{\alpha,1} & \leq 
	c_{\alpha, T}\Vert x-\tilde{x} \Vert_{\alpha,1}.
	\end{split}
	\end{equation*}
	On the other hand, since $\zeta$ is differentiable, it will be continuous and  
	$	|\zeta^{h}-\zeta^{\tilde{h}}| $ can be controlled.
	Therefore, $F$ is continuous in both variables $(h,x)$. 
	
	We  next show the Fr\'echet differentiability. Let $v,w\in W_1^{\alpha}(0,T)$. By 
	\cite[Proposition 2.2]{BR}, we have that
	$$
	\Vert D_{2,v}F(h,x) - D_{2,w}F(h,x) \Vert_{\alpha,1} \leq c_{\alpha, T} \Vert v-w \Vert_{\alpha,1}.
	$$
	Thus, $D_{2,\cdot}F(h,x)$ is a bounded linear operator.
	Moreover,
	\begin{equation*}
	F(h, x+v)_t-F(h,x)_t-D_{2,v}F(h,x)_t 
	 =\int_0^t (b( x_s)-b(x_s+v_s)+ b'(x_s) v_s) ds.
	\end{equation*}
	By the mean value theorem and \cite[Proposition 2.2]{BR},
	$$\bigg\Vert\int_0^{\cdot} (b( x_s)-b(x_s+v_s)+ b'(x_s) v_s) ds \bigg\Vert_{\alpha, 1} \leq c_{\alpha, T}  \Vert v 		\Vert_{\alpha, 1}^2.$$
	This shows that (\ref{der2}) is the Fr\'echet derivative of $F(h,x)$ with respect to $x$. The Fr\'echet 		differentiability with respect to $h$ and the derivative is given easily by (\ref{der1}) from our hypothesis.
\end{proof}

\begin{proposition}\label{frechet}
	Assume the hypotheses of Lemma \textnormal{\ref{lema3}}. Let $x^g$ be the solution of 
	$$x_t^g = \eta_0+\zeta_t^g+\int_0^t b(x_s^g)ds,\quad t\in(0,T].$$
	Then the mapping $$g \in W_2^{1-\alpha}(0,T)\rightarrow x^g \in W_1^{\alpha}(0,T)$$ is Fr\'echet differentiable in 	the direction $h$ and  the derivative  is given by
	\begin{equation*}
	D_h x_t^g =D_h \zeta^g_t + \int_0^t  b'(x_s^g) D_h x_s^g ds.
	\end{equation*}
\end{proposition}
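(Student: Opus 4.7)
My plan is to obtain the statement as a direct application of the implicit function theorem to the map $F$ studied in Lemma \ref{lema3}. Since $x^g$ is precisely the (unique) solution of $F(g,x^g)=0$, once I can show that the partial Fréchet derivative $D_{2,\cdot}F(g,x^g)$ is a linear homeomorphism on $W_1^{\alpha}(0,T)$, the implicit function theorem immediately gives both the Fréchet differentiability of $g \mapsto x^g$ in the direction $h$ and, by implicit differentiation, an equation for $D_h x^g$.

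The core step is therefore to prove that the bounded linear operator
\begin{equation*}
A: W_1^{\alpha}(0,T) \longrightarrow W_1^{\alpha}(0,T),\qquad v \longmapsto v - \int_0^{\cdot} b'(x^g_s)\, v_s\, ds
\end{equation*}
is invertible, i.e.\ that for every $w\in W_1^{\alpha}(0,T)$ the Volterra integral equation $v_t - \int_0^t b'(x^g_s) v_s\, ds = w_t$ has a unique solution $v\in W_1^{\alpha}(0,T)$ depending continuously on $w$. Because $b\in\cac^2_b$, the multiplier $b'(x^g)$ is bounded, and Proposition 2.2 of \cite{BR} (the same tool used in Lemma \ref{lema3}) controls the $\|\cdot\|_{\alpha,1}$-norm of the integral operator on short time intervals. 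A standard Picard iteration on a subinterval of length small enough that the contraction constant from Proposition 2.2 of \cite{BR} is strictly less than one gives existence and uniqueness of $v$; a patching argument then extends the solution to all of $[0,T]$. This delivers the bounded inverse $A^{-1}$.

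With $A$ invertible, the implicit function theorem applies, and the derivative must satisfy
\begin{equation*}
D_{1,h}F(g,x^g) + D_{2,\,D_h x^g}F(g,x^g)=0.
\end{equation*}
Substituting the explicit derivatives \eqref{der1} and \eqref{der2} from Lemma \ref{lema3} yields
\begin{equation*}
-D_h\zeta^g_t + D_h x^g_t - \int_0^t b'(x^g_s)\, D_h x^g_s\, ds = 0,
\end{equation*}
which, rearranged, is exactly the announced formula. Note that this same computation shows that $D_h x^g = A^{-1}(D_h\zeta^g)$, so the regularity $D_h x^g \in W_1^{\alpha}(0,T)$ is automatic from the hypothesis $D_h\zeta^g \in W_1^{\alpha}(0,T)$.

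The main obstacle is the invertibility of $A$ on $W_1^{\alpha}(0,T)$: while the Volterra structure makes the argument essentially routine, it is crucial to carry it out in the correct norm, since $b'(x^g)$ is only bounded, not necessarily Hölder. This is why Proposition 2.2 of \cite{BR} is essential — it is precisely the estimate that allows Picard iteration to close in the $\|\cdot\|_{\alpha,1}$-norm, just as it did for the original nonlinear equation in Lemma \ref{lem:exist-unic}.
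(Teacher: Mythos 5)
Your proposal is correct and follows essentially the same route as the paper: both apply the implicit function theorem to the map $F$ of Lemma \ref{lema3}, writing $D_h x^g = -D_{2,\cdot}F(g,x^g)^{-1}\bigl(D_{1,h}F(g,x^g)\bigr)$ and reading off the announced Volterra equation from \eqref{der1} and \eqref{der2}. The only difference is that you spell out the invertibility of $v\mapsto v-\int_0^{\cdot}b'(x^g_s)v_s\,ds$ via Picard iteration and patching, a step the paper delegates to the argument of Proposition 4 in \cite{N-S}.
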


\begin{proof}
	Following the ideas of the proof of Proposition 4 in \cite{N-S} and the result of Lemma \ref{lema3} we have
\begin{eqnarray*}
D_{2,v} F(0,x^g)_t& =& v_t- \int_0^t  b'(x_s^g) v_s ds.\\
Dx^g&=&-D_{2,\cdot} F(0,x^g)^{-1} \circ D_1 F(0,x^g).
\end{eqnarray*}
	Then, for any $h \in W_2^{1-\alpha}(0,T)$, $-D_h x^g$ is the unique solution of the differential equation
	$$D_{1,h} F(0,x^g)_t= - D_{2,D_h x^g} F(0,x^g)_t,$$
	that is
	$$-D_h \zeta^g_t=-D_h x^g_t+ \int_0^t  b'(x_s^g) D_h x^g_s ds.$$
	So, we obtain the desired derivative.	
\end{proof}

We finish this section with an expression of the Fr\'echet derivative.

\begin{proposition} \label{frechet2}
	Assume the hypotheses of Theorem \ref{prop:derxf}. Assume that the derivative in the direction $h\in W_1^{1-\alpha}(0,T)$ is given by
	\begin{equation*}
	D_h \bar x_t^g = \int_0^t D_s \bar x_t^g dh_s,
	\end{equation*}
	 	with $D_s \bar x_u^g=0$ if $s>u$. Then the derivative of the solution of equation \eqref{eq:detsig} in the direction $h$ is given by
	\begin{equation*}
	D_h x_t^g = \int_0^t \Phi_t^{g}(s) dh_s,
	\end{equation*}
	 and $\Phi_t^g(s)$ defined as
\begin{equation}\label{phis}	
\Phi_t^{g}(s) =\begin{cases}
	\displaystyle\int_s^t b' (x_u^{g}) \Phi_u^g(s) du + \sigma (\bar x^g_{s-r}) + \int_s^t \sigma' (\bar x^g_{u-r}) D_s \bar x^g_{u-r} dg_u,&\qquad \mbox{if} \;s\leq t, \\[10pt]
	0,& \qquad \mbox{if} \;s> t.
	\end{cases}
	\end{equation}
\end{proposition}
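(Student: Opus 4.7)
My plan is to establish the formula in three steps: (i) show that \eqref{phis} admits a unique solution $\Phi^g_t(s)$ for each fixed $s$; (ii) verify that $Y_t := \int_0^t \Phi^g_t(s)\,dh_s$ satisfies the same linear integral equation that Theorem \ref{prop:derxf} assigns to $D_h x_t^g$; and (iii) conclude by uniqueness.

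For step (i), I would view \eqref{phis} as a linear Volterra equation in $t \in [s,T]$ for each fixed $s$. The forcing term $\sigma(\bar x^g_{s-r}) + \int_s^t \sigma'(\bar x^g_{u-r}) D_s \bar x^g_{u-r}\, dg_u$ is well defined and continuous in $t$ by the Young integration estimates used in \cite{B-R,N-R}. Boundedness of $b'$ then yields, via a standard contraction argument, a unique solution $t \mapsto \Phi^g_t(s)$ with controlled $W_1^\alpha(s,T)$-norm. For step (ii), I would integrate \eqref{phis} against $dh_s$ on $[0,t]$ and interchange the order of integration: the $du\,dh_s$ term becomes $\int_0^t b'(x^g_u)\left(\int_0^u \Phi^g_u(s)\,dh_s\right) du = \int_0^t b'(x^g_u)\, Y_u\,du$; the second term is already $\int_0^t \sigma(\bar x^g_{s-r})\,dh_s$; and the $dg_u\,dh_s$ term becomes $\int_0^t \sigma'(\bar x^g_{u-r})\left(\int_0^u D_s \bar x^g_{u-r}\,dh_s\right) dg_u$, whose inner integral equals $D_h \bar x^g_{u-r}$ thanks to the vanishing hypothesis $D_s \bar x^g_{u-r}=0$ for $s>u-r$. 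Thus $Y$ satisfies the same linear equation as $D_h x^g$ given in Theorem \ref{prop:derxf}. Step (iii) follows from a Gronwall-type uniqueness argument in $W_1^\alpha(0,T)$ for this linear equation, which yields $D_h x_t^g = Y_t = \int_0^t \Phi^g_t(s)\,dh_s$.

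The main obstacle lies in justifying the two Fubini interchanges used in step (ii), because the inner integrals are pathwise Young integrals with respect to $h$ and $g$ of finite $W_2^{1-\alpha}$-variation, so classical measure-theoretic Fubini does not apply directly. I plan to handle this by approximating $g$ and $h$ by smooth functions, applying the classical Fubini theorem to the smoothed double integrals, and passing to the limit using the continuity of the Young integral under the $W_2^{1-\alpha}$--$W_1^\alpha$ pairing; the strategy parallels Lemma 5 of \cite{N-S} in the undelayed setting. Controlling the joint regularity of the kernel $(s,u) \mapsto D_s \bar x^g_{u-r}$ so that these limits exist, together with verifying that the restriction of $u$ to $[s,t]$ in \eqref{phis} is consistent with the support of $D_s \bar x^g_{u-r}$, is the key technical input.
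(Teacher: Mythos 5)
Your proposal is correct and follows essentially the same route as the paper: the paper's proof also substitutes the definition of $\Phi^g_t(s)$ into $\int_0^t \Phi^g_t(s)\,dh_s$, interchanges the order of integration (using $D_s\bar x^g_{u-r}=0$ for $s>u-r$ to extend the inner integral to $[0,u-r]$), recognizes the resulting expression as the linear equation for $D_h x^g_t$ from Theorem \ref{prop:derxf}, and concludes implicitly by uniqueness. Your additional attention to the well-posedness of the Volterra equation for $\Phi$ and to justifying the Fubini interchange for Young integrals is detail the paper leaves to the reference \cite[Proposition 4]{N-S}, but it does not change the argument.
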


\begin{proof}
	The proof follows similarly as the proof of \cite[Proposition 4]{N-S}. 
		Indeed, for $t \in [T]$
	\begin{eqnarray*}
		&& 	\int_0^t \Phi^g_t(s) dh_s =
		\int_0^t \Big(    \int_s^t b' (x_u^g) \Phi_u^g(s) du + \sigma (\bar x^g_{s-r}) + \int_s^t \sigma' (\bar x^g_{u-r}) D_s \bar x^g_{u-r}  dg_u \Big) dh_s \\
		&& =
		\int_0^t b' (x_u^g)  \Big( \int_0^u  \Phi_u^g(s)dh_s \Big)  du +  \int_0^t \sigma (\bar x^g_{s-r})    dh_s +
		\int_0^t \sigma' (\bar x^g_{u-r})  \Big( \int_0^{u-r}  D_s \bar x^g_{u-r}  dh_s \Big)  dg_u \\
		&& =
		\int_0^t b' (x_u^g)  D_h x_u^g du +  \int_0^t \sigma (\bar x^g_{s-r})    dh_s 
		+
		\int_0^t \sigma' (\bar x^g_{u-r})  D_h \bar x^g_{u-r}  dg_u,
	\end{eqnarray*}
	and $\Phi_t^{g}(s)=0$ if $s>t$. Notice that we have used that for $s>u-r$ we know that $D_s \bar x_{u-r}^g=0$
\end{proof}


\renewcommand{\theequation}{6.\arabic{equation}}
\setcounter{equation}{0}

\section{Definition of a sequence that converges to the deterministic solution}\label{seccon}

Our aim is to construct an approximated sequence in intervals of length $r$ that converges to the solution of the deterministic equation
\begin{equation*} 
\displaystyle x_t=\begin{cases}
\displaystyle\eta_0 + \int_0^t b (x_s)ds + \int_0^t \sigma(x_{s-r})dg_s + y_t, & \textrm{if} \;\; t\in\left[0,T\right],\\ 
\eta_t, & \textrm{if} \;\; t\in[-r,0].
\end{cases}  
\end{equation*}

We will not write the dependence on $g$ of the solution of this equation in order to simplify the notation. Let us recall that $b_\varepsilon(x)=b(x)+\frac1\varepsilon f(x)$ for $f$ defined as in Section \ref{sec:approx} and assume throughout this section that $b, \sigma \in \cac^2_b$, $\eta \in \cac^{1-\alpha} (-r,0)$ and $T=mr$. 

\medskip

We start with the interval $[-r,r]$. Since the map $t \to \int_0^t \sigma(\eta_{s-r})dg_s$ belongs to $ \cac^{1-\alpha} (0,r)$, we have seen in   Section \ref{sec:approx} that  the solutions to the equations
\begin{equation*} 
\displaystyle x^{\varepsilon}_t=\begin{cases}
\displaystyle\eta_0 + \int_0^t b_{\varepsilon} (x_s^{\varepsilon})ds + \int_0^t \sigma(\eta_{s-r})dg_s, & \textrm{if} \;\; t\in\left[0,r\right],
\\ \eta_u, & \textrm{if} \;\; t\in[-r,0],
\end{cases}  
\end{equation*}
converge uniformly to a continuous function $x$ belonging to  $ \cac^{1-\alpha} (-r,r)$ such that 
$$ \lim_{\varepsilon \to 0} \sup_{t \in [-r,r]} |x_t^{\varepsilon}-x_t |=0,$$
and that satisfies the reflected equation
\begin{equation*} 
\displaystyle x_t=\begin{cases}
\displaystyle\eta_0 + \int_0^t b (x_s)ds + \int_0^t \sigma(x_{s-r})dg_s + y_t, & \textrm{if} \;\; t\in\left[0,r\right],
\\ \eta_u, & \textrm{if} \;\; t\in[-r,0].
\end{cases}  
\end{equation*}

\medskip

Let us consider now the interval $[-r,2r]$ and the equation
\begin{equation*} 
x^{\varepsilon}_t=
\begin{cases}
\displaystyle x_t^{\varepsilon}, & \textrm{if} \;\; t\in\left[-r,r\right],
\\
\displaystyle x_r^{\varepsilon}  + \int_r^t b_{\varepsilon} (x^{\varepsilon}_s)ds + \int_r^t \sigma(x_{s-r})dg_s, & \textrm{if} \;\; t\in\left[r,2r\right],
\end{cases}
\end{equation*}
that can be written
$$ x^{\varepsilon}_t= \eta_0  + \int_0^t b_{\varepsilon} (x^{\varepsilon}_s)ds + \int_0^t \sigma(x_{s-r})dg_s,$$
for $t\in[0,2r]$. Following the same arguments, there exists a continuous function $x$ such that 
$$ \lim_{\varepsilon \to 0} \sup_{t \in [-r,2r]} |x_t^{\varepsilon}-x_t |=0,$$
and that satisfies the reflected equation
\begin{equation*} 
\displaystyle x_t=\begin{cases}
\displaystyle\eta_0 + \int_0^t b (x_s)ds + \int_0^t \sigma(x_{s-r})dg_s + y_t, & \textrm{if} \;\; t\in\left[0,2r\right],
\\ \eta_u, & \textrm{if} \;\; t\in[-r,0].
\end{cases}  
\end{equation*}

 Notice that we have used the same notation $x$ since the solution is the obvious extension of the function defined in $[-r,r].$ 

\medskip

Repeating the same argument $m$ times (recall that $T=mr$), we will obtain a  sequence 
\begin{equation*} 
x^{\varepsilon}_t=\begin{cases}
\displaystyle \eta_t, & \textrm{if} \;\; t\in\left[-r,0\right]\\
\displaystyle x_r^{\varepsilon}  + \int_r^t b_{\varepsilon} (x^{\varepsilon}_s)ds + \int_r^t \sigma(x_{s-r})dg_s, & \textrm{if} \;\; t\in\left[0,T\right],
\end{cases}
\end{equation*}
and a function $x$ belonging to  $ \cac^{1-\alpha} (0,T)$ satisfying
$$ \lim_{\varepsilon \to 0} \sup_{t \in [-r,T]} |x_t^{\varepsilon}-x_t |=0,$$
such that
$$x_t=\eta_0  + \int_0^t b (x_s)ds + \int_0^t \sigma(x_{s-r})dg_s + y_t,$$
for all $t\in [0,T]$.

\medskip

Now we have a sequence that converges to  $x$ uniformly in $(-r,T]$.
We want to deal now with its Fr\'echet differentiability.

\medskip

First, we deal again with the interval $[0,r]$. Clearly, from Theorem \ref{prop:derxf} yields that $x_t^{\varepsilon}$ are Fr\'echet differentiable in the directions $h \in W_2^{1-\alpha}(0,r)$  and that its derivative is, 
for $t\in[0,r]$
$$D_hx^{\varepsilon}_t = \int_0^t b'_{\varepsilon} (x_s^{\varepsilon}) D_hx_s^{\varepsilon} ds + \int_0^t \sigma(\eta_{s-r}) dh_s.$$
Moreover, from Proposition \ref{frechet2}, the derivative in the direction $h$ is given by
$$D_h x_t^{\varepsilon} = \int_0^t \Phi_t^{g,\varepsilon}(s) dh_s,$$
for $t \in [0,r]$ and $\Phi_t^\varepsilon(s)$ defined as
\[\Phi_t^{g,\varepsilon}(s) =\begin{cases}
\displaystyle\int_s^t b'_{\varepsilon} (x_u^{\varepsilon}) \Phi_u^{g,\varepsilon}(s) du + \sigma (\eta_{s-r}), &\qquad \mbox{if} \;s\leq t,\\[10pt]
0,& \qquad \mbox{if} \;s> t.
\end{cases}\]

\medskip

Let us consider a the general case, that is,  for $t \in [(l-1)r,lr]$.  We assume that $x_t$ is differentiable  in the direction $h$.   Then by Theorem \ref{prop:derxf} we clearly have that $x_t^{\varepsilon}$ are Fr\'echet differentiable in the direction $h$ and also we can yield its derivative.
\begin{eqnarray*}
	D_hx_t^{\varepsilon} &=&  D_h \int_0^t \sigma\left(x_{s-r}\right)dg_s + \int_0^t b'_\varepsilon \left(x_s^{\varepsilon}\right) D_hx_s^{\varepsilon} ds \\
& = &\int_0^t b'_{\varepsilon} \left(x_s^{\varepsilon}\right) D_hx_s^{\varepsilon} ds + \int_0^t \sigma \left(x_{s-r}\right) dh_s + \int_0^t \sigma' (x_{s-r}) D_h x_{s-r} dg_s.
\end{eqnarray*}
By Proposition \ref{frechet2}, this derivative in the direction $h$ can be written
$$D_h x_t^{\varepsilon} = \int_0^t \Phi_t^{g,\varepsilon}(s) dh_s,$$
for $t \in [0,lr]$ and $\Phi_t^\varepsilon(s)$ defined as
\[\Phi_t^{g,\varepsilon}(s) =\begin{cases}
\displaystyle\int_s^t b'_{\varepsilon} (x_u^{\varepsilon}) \Phi_u^{g,\varepsilon}(s) du + \sigma (x_{s-r}) + \int_s^t \sigma' (x_{u-r}) D_s x_{u-r} dg_u,&\qquad \mbox{if} \;s\leq t,\\[10pt]
0,& \qquad \mbox{if} \;s> t.
\end{cases}\]


\renewcommand{\theequation}{7.\arabic{equation}}
\setcounter{equation}{0}

\section{Stochastic case}\label{secsto}

In this section we apply the results obtained in the previous sections to the stochastic delay differential equation with positivity constraints \eqref{eq:prin-frac}. We can assume  without loss of generality that $T_0=mr$.

\medskip
Recall that $W^H=\{ W_t^H,\, t\in [0,T_0]\}$ is a one-dimensional fractional Brownian motion with Hurst parameter $H>\frac12$. That is, a centered Gaussian  process with covariance function
\[\be(W_t^{H} W_t^{H})=R_H(t,s)=\frac12\left( t^{2H}+s^{2H}-|t-s|^{2H}\right).\]
Fix $\alpha \in (1-H, \frac12)$.
As the trajectories of $W^H$ are $(1-\alpha+\epsilon)$-H\"older continuous for all $\epsilon<H+\alpha-1$,
by the first inclusion in (\ref{inclusion}), we can apply the framework of the previous sections.  In particular, under the assumptions of Theorem \ref{thprin}, there exist the solutions of equation \eqref{eq:prin-frac} and all the other stochastic equations that appear in this section.

We next proceed with the study of the Malliavin differentiability of the solution. 
We begin applying the deterministic results of Theorem \ref{prop:derxf} to our class of stochastic equations.

\begin{theorem} \label{d12loc}
	Let $\{\bar X_t, t \in [-r,kr]\} $ be a stochastic process in  $W_1^{\alpha}(-r,kr),$ such that  $\bar X$ is almost surely differentiable in the directions of the Cameron-Martin space. 
	Assume $b,\sigma\in \cac^2_b$ and set $\tilde X$ the solution to equation
\begin{eqnarray*} 
&&\tilde X_t=\eta_0+\int_0^t b(\tilde X_s)ds+\int_0^t \sigma(\bar X_{s-r})dW^H_s, \qquad t\in[0,r(k+1)],\nonumber\\
&&\tilde X_t=\eta_t,\qquad t\in[-r,0] .
\end{eqnarray*} 
Then, the solution $\tilde X$ is almost surely differentiable in the directions of the Cameron-Martin space. Moreover, for any $t>0$, the derivative satisfies 
	\begin{equation} \label{eq:der} D_s \tilde X_t=   \int_0^t   b'(\tilde X_u)   D_s \tilde X_u  du +  \sigma (\bar X_{s-r})   + \int_0^t \sigma' (\bar X_{u-r}) D_s \bar X_{u-r}  dW^H_u
	\end{equation}
	if $s\leq t$ and $D_s \tilde X_t=0$ if $s>t$.
\end{theorem}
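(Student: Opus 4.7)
The plan is to apply the deterministic Fr\'echet differentiability result of Theorem \ref{prop:derxf} pathwise to the trajectories of $W^H$, and then identify the resulting Cameron--Martin directional derivative with the Malliavin derivative. First, I would fix $\alpha \in (1-H, 1/2)$ as in the paper's framework. Since the trajectories of $W^H$ are almost surely $(1-\alpha+\varepsilon)$-H\"older continuous for $\varepsilon < H+\alpha-1$, by the first inclusion in (\ref{inclusion}) they belong to $W_2^{1-\alpha}(0, r(k+1))$ almost surely. Moreover, the Cameron--Martin space $\mathcal{H}_H$ embeds continuously into $W_2^{1-\alpha}(0, r(k+1))$ by the inclusions recalled in the Preliminaries, so every Cameron--Martin direction $h$ is an admissible perturbation in the sense of Theorem \ref{prop:derxf}.

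Next, the hypothesis that $\bar X$ is a.s.\ differentiable in the directions of the Cameron--Martin space, combined with $\bar X \in W_1^{\alpha}(-r,kr)$ a.s., means that for a.e.\ $\omega$ the map $g \mapsto \bar X^{g}$ satisfies the assumptions of Proposition \ref{prop:sigma} in a neighbourhood of $g = W^H(\omega)$ in $W_2^{1-\alpha}$. Applying Theorem \ref{prop:derxf} pathwise at $g = W^H(\omega)$ and any $h \in \mathcal{H}_H$ yields
\begin{equation*}
D_h \tilde X_t = \int_0^t b'(\tilde X_u) D_h \tilde X_u \, du + \int_0^t \sigma(\bar X_{u-r}) dh_u + \int_0^t \sigma'(\bar X_{u-r}) D_h \bar X_{u-r} dW^H_u.
\end{equation*}
Invoking Proposition \ref{frechet2}, with the inductive kernel $D_s \bar X_u$ inherited from $\bar X$, this directional derivative admits the kernel representation $D_h \tilde X_t = \int_0^t \Phi_t(s) \, dh_s$, where $\Phi_t(s)$ solves the Volterra-type equation (\ref{phis}) that matches the right-hand side of (\ref{eq:der}) for $s \le t$ and vanishes for $s>t$.

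Finally, the identification of the kernel $\Phi_t(\cdot)$ with the Malliavin derivative $D_s \tilde X_t$ follows from the standard characterization on the abstract Wiener space $(\Omega, \mathcal{H}, \mathrm{P})$ introduced in Section 2: when a random variable is continuously Fr\'echet differentiable in Cameron--Martin directions with directional derivative representable by a square-integrable kernel against $\dot h = \mathcal{K}_H^{-1} h$, that kernel coincides with the Malliavin derivative. The main obstacle is to ensure that the pathwise Fr\'echet framework of Section \ref{secdif} genuinely transfers to the noise paths: one must verify that the input $\bar X$ is Fr\'echet differentiable as a map between the specific spaces $W_2^{1-\alpha}(0, kr) \to W_1^{\alpha}(-r, kr)$ used in Theorem \ref{prop:derxf}, uniformly enough along the fBm trajectories so that the chain rule underlying Proposition \ref{prop:sigma} applies. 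Once this regularity is in place, the identity (\ref{eq:der}) and the support property $D_s \tilde X_t = 0$ for $s > t$ are direct consequences of Proposition \ref{frechet2}.
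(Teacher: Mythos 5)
Your proposal is correct and follows essentially the same route as the paper: apply the deterministic Fr\'echet differentiability results (Theorem \ref{prop:derxf} and Proposition \ref{frechet2}) pathwise to the fBm trajectories, obtain the kernel representation $D_h\tilde X_t=\int_0^t\Phi_t(s)\,dh_s$, and identify $\Phi_t^{W^H}$ with the Malliavin derivative via $D_{\mathcal{R}_H\varphi}\tilde X_t=\langle \Phi_t^{W^H},\varphi\rangle_{\mathcal{H}}=\langle D\tilde X_t,\varphi\rangle_{\mathcal{H}}$ on the abstract Wiener space $(\Omega,\mathcal{H},\mathrm{P})$. The paper carries out the last identification by the explicit computation with $\mathcal{K}_H^{\ast}$, which is exactly the ``standard characterization'' you invoke.
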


\begin{proof}
	By Proposition \ref{frechet2}, the solution is Fr\'echet differentiable and for all $\varphi \in \mathcal{H}$  the Fr\'echet derivative
	$$D_{\mathcal{R}_H \varphi}\tilde X_t^i=\frac{d}{d\epsilon} \tilde X_t^i(\omega+\epsilon\mathcal{R}_H \varphi) \vert_{\epsilon=0}$$
	exists, which proves the first statement of the theorem.
	
	The derivative $D_{\mathcal{R}_H \varphi}\tilde X_t$  coincides with
	$\langle D \tilde X_t, \varphi \rangle_{\mathcal{H}},$
	where $D$ is the usual Malliavin derivative. Furthermore, for any $\varphi \in \mathcal{H}$, since we can write
	$$D_{\mathcal{R}_H \varphi} \bar X_t  =\langle D \bar X_t, \varphi \rangle_{\mathcal{H}},$$
	by Proposition \ref{frechet2}, we get,
	\begin{equation*} \begin{split}
	D_{\mathcal{R}_H \varphi} \tilde X_t &= \int_0^t \Phi_t^{W^H}(s) d(\mathcal{R}_H \varphi)(s)\\
	&= \int_0^t \Phi_t^{W^H}(s) \left( \int_0^s \partial_s K_H(s,u)(\mathcal{K}_H^{\ast} \varphi)(u) du\right) ds\\
	&=\sum_{j=1}^m \int_0^T (\mathcal{K}_H^{\ast} \Phi_t^{W^H})(s) (\mathcal{K}_H^{\ast} \varphi) (s) ds\\
	&=\langle \Phi_t^{W^H}, \varphi \rangle_{\mathcal{H}}
	\end{split}
	\end{equation*}
	and equation (\ref{eq:der}) follows from (\ref{phis}). This concludes the proof.
\end{proof}

\vskip 3pt
\noindent
Now, we can deal with the Malliavin derivability of the solution to the equation (\ref{eq:prin-frac}). First, we recall a technical lemma proved in \cite{Nu}.

\begin{lemma}\label{conve}
	Let $\{F_n,\,n\geq 1\}$ be a sequence of random variables in $\md^{1,p}$ that converges to F in $L^p(\Omega)$ and such that
	\[\sup_n\be\lc\|DF_n\|^p_\ch\rc<\infty.\]
	Then, $F$ belongs to $\md^{1,p}$, and the sequence of derivatives $\{DF_n,\,n\geq 1\}$ converges to $DF$ in the weak topology of $L^p(\Omega,\ch)$.
\end{lemma}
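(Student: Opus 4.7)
\textbf{Proof plan for Lemma \ref{conve}.} The strategy is the standard weak compactness argument that underpins most closability-type results in Malliavin calculus: combine the boundedness of $\{DF_n\}$ in a reflexive space with the duality between the derivative $D$ and the divergence $\delta$ to identify the weak limit. Since $p>1$, the space $L^p(\Omega;\ch)$ is reflexive, so by Banach--Alaoglu there exists a subsequence $\{F_{n_k}\}$ and some $G\in L^p(\Omega;\ch)$ with $DF_{n_k}\to G$ weakly. The whole point is to show that $F\in\md^{1,p}$ with $DF=G$; uniqueness of the weak limit then upgrades the subsequential weak convergence to weak convergence of the full sequence.

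The key step is the identification $G=DF$. Let $\mss$ denote the class of smooth cylindrical random variables and fix $h\in\ch$. For each $n$, the integration by parts formula
\[
\be\bigl[\langle DF_n,h\rangle_{\ch}\,\Phi\bigr]
=\be\bigl[F_n\,\delta(\Phi h)\bigr]
\]
holds for every $\Phi\in\mss$, since $\Phi h$ is in the domain of the divergence and $\delta(\Phi h)$ has moments of all orders. Passing to the limit along the subsequence: on the left, weak convergence $DF_{n_k}\to G$ in $L^p(\Omega;\ch)$ together with $\Phi h\in L^q(\Omega;\ch)$ (for the conjugate exponent $q$) gives the limit $\be[\langle G,h\rangle_{\ch}\Phi]$; on the right, strong convergence $F_{n_k}\to F$ in $L^p(\Omega)$ together with $\delta(\Phi h)\in L^q(\Omega)$ gives $\be[F\,\delta(\Phi h)]$. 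Hence
\[
\be\bigl[\langle G,h\rangle_{\ch}\,\Phi\bigr]=\be\bigl[F\,\delta(\Phi h)\bigr]
\qquad\text{for every }\Phi\in\mss,\ h\in\ch.
\]
This is exactly the characterization of $F\in\md^{1,p}$ with $DF=G$, via the closed extension of $D$ as the adjoint of $\delta$.

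Finally, since every weakly convergent subsequence of $\{DF_n\}$ must have $DF$ as its limit, and since the sequence $\{DF_n\}$ is bounded in the reflexive space $L^p(\Omega;\ch)$, a standard subsequence-of-subsequence argument gives that the entire sequence $\{DF_n\}$ converges weakly to $DF$ in $L^p(\Omega;\ch)$.

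The only delicate point is ensuring the two limit passages in the integration by parts identity, that is, confirming that $\Phi h$ and $\delta(\Phi h)$ enjoy enough integrability to pair with the $L^p$/weak-$L^p$ limits of the two sides. This is handled by restricting $\Phi$ to the smooth cylindrical class, where all moments of $\delta(\Phi h)$ are finite; since $\mss$ is dense in the relevant duality, this is sufficient to characterize $DF$.
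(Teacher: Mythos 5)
Your argument is correct and is essentially the standard proof of this lemma (Lemma 1.2.3 in Nualart's book), which the paper does not reproduce but simply cites as proved in \cite{Nu}: weak sequential compactness of the bounded sequence $\{DF_n\}$ in the reflexive space $L^p(\Omega;\ch)$, identification of the weak limit through the duality with the divergence operator on smooth cylindrical test functionals, and a subsequence-of-subsequences argument for the full weak convergence. The only cosmetic remark is that the identification step can be closed off even more directly by noting that the graph of $D$ on $\md^{1,p}$ is a strongly closed linear subspace, hence weakly closed, so that $(F,G)$ lies in it; this sidesteps having to invoke the equivalence between the weak (adjoint-of-$\delta$) characterization of $\md^{1,p}$ and its definition as a completion.
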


Using the lemma above we can state the following result:

\begin{proposition} \label{propd12}
For any $t \in (0,T_0]$, $X_t \in \D^{1,2}$.
\end{proposition}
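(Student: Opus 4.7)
The plan is to apply Lemma \ref{conve} with $p=2$ to the penalization sequence $\{X_t^\varepsilon\}_{\varepsilon>0}$ constructed in Section \ref{seccon}. From that section $X_t^\varepsilon \to X_t$ uniformly in $t$, and the pathwise sandwich $v_t \le X_t^\varepsilon \le \tilde u_t$ of Section \ref{sec:approx} (whose bounds are independent of $\varepsilon$ and belong to every $L^p(\Omega)$ because the Young integrals built from $\sigma$, $W^H$ and $\eta$ do) promotes this to convergence in $L^2(\Omega)$. It therefore remains only to verify that $X_t^\varepsilon \in \D^{1,2}$ for every $\varepsilon > 0$ together with the uniform bound $\sup_{\varepsilon>0} \me\bigl[\|DX_t^\varepsilon\|_{\cH}^2\bigr] < \infty$.

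Both facts would be obtained inductively on the intervals $[(k-1)r, kr]$, $k=1,\dots,m$. On the base interval $[0,r]$ the delay input $\eta_{\cdot-r}$ is deterministic, so Theorem \ref{d12loc} applied with $\bar X = \eta$ gives directly that $X_t^\varepsilon \in \D^{1,2}$ and
\[D_s X_t^\varepsilon = \Phi_t^\varepsilon(s) = \sigma(\eta_{s-r}) + \int_s^t b_\varepsilon'(X_u^\varepsilon)\,\Phi_u^\varepsilon(s)\,du, \qquad 0\le s\le t\le r.\]
Assuming inductively that $X_\cdot \in \D^{1,2}$ on $[-r, kr]$, so that the delay input $X_{\cdot-r}$ is itself Malliavin differentiable on the next interval, a second invocation of Theorem \ref{d12loc} with $\bar X = X$ yields $X_t^\varepsilon \in \D^{1,2}$ on $[kr,(k+1)r]$ with derivative $\Phi_t^\varepsilon(s)$ given by the full formula (\ref{phis}).

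The main obstacle is the uniform-in-$\varepsilon$ control on $\|DX_t^\varepsilon\|_{\cH}$. The coefficient $b_\varepsilon' = b' + \tfrac{1}{\varepsilon} f'$ in the variation equation for $\Phi_t^\varepsilon(s)$ is singular as $\varepsilon \downarrow 0$, but since $f$ is non-increasing we have $f' \le 0$ and therefore $b_\varepsilon' \le \|b'\|_\infty$ pointwise. Variation of parameters then produces
\[\Phi_t^\varepsilon(s) = e^{B_{s,t}^\varepsilon}\,\sigma(X_{s-r}) + \int_s^t e^{B_{v,t}^\varepsilon}\,\sigma'(X_{v-r})\,D_s X_{v-r}\,dW_v^H,\]
where $B_{v,t}^\varepsilon := \int_v^t b_\varepsilon'(X_u^\varepsilon)\,du \le T_0 \|b'\|_\infty$ and the stochastic integral is a pathwise Young integral. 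Standard Young-integral estimates as in \cite{N-R}, combined with the inductive bound on $\|DX_{\cdot-r}\|_{\cH}$, control the second term by a random variable with finite second moment independent of $\varepsilon$. The classical embedding $L^{1/H}(0,T_0) \hookrightarrow \cH$ (valid for $H>1/2$) then turns the resulting pointwise bound on $|\Phi_t^\varepsilon(s)|$ into a uniform bound on $\|DX_t^\varepsilon\|_{\cH}^2$ in $L^1(\Omega)$. Applying Lemma \ref{conve} along any sequence $\varepsilon_n \downarrow 0$ delivers $X_t \in \D^{1,2}$ and closes the induction.
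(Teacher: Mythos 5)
Your proposal follows essentially the same route as the paper's proof: the penalized approximations $X^\varepsilon_t$, Theorem \ref{d12loc} applied inductively on the intervals $[(k-1)r,kr]$, the key observation that $f'\le 0$ renders the singular factor $\exp\bigl(\frac{1}{\varepsilon}\int_s^t f'(X_u^\varepsilon)\,du\bigr)\le 1$ harmless so that $\sup_\varepsilon \me\bigl[\|DX_t^\varepsilon\|^2_{\mathcal{H}}\bigr]<\infty$, and Lemma \ref{conve} to conclude $X_t\in\D^{1,2}$. The only cosmetic differences are that you obtain the $L^2(\Omega)$ convergence from the sandwich $v\le X^\varepsilon\le\tilde u$ where the paper uses monotonicity of $X_t^\varepsilon$ in $\varepsilon$ together with $\me(X_t^2)<\infty$, and that your variation-of-constants formula carries the exponential weight inside the Young integral where the paper factors it outside the whole forcing term; both versions give the same uniform-in-$\varepsilon$ bound.
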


\begin{proof}
Let us recall that we have for the deterministic case that $x_t= \lim_{\varepsilon \to 0 } x_t^\varepsilon$ and that $x_t^\varepsilon$ is an increasing sequence (Propositions \ref{th:conv-eps} and \ref{prop:G_sol}). So $X_t^\varepsilon$ converges almost surely to $X_t$. Since $X_t^\varepsilon$ is an increasing sequence and we know that $E(X_t^2)< \infty$, we get that $\{X_t^\varepsilon, \varepsilon>0\}$ converges to $X_t $ in $L^2$.

\medskip

We begin studying $t \in [0,r]$. 
Using Theorem \ref{d12loc}, since the initial condition $\eta$ is deterministic we get that, for any $\varepsilon >0$, $X_t^\varepsilon$ belongs to   $\md^{1,2}$ and its derivative satisfies, for $s < t$
$$ D_s X_t^\varepsilon = \int_s^t b'_{\varepsilon} (X_u^{\varepsilon}) D_s X_u^\varepsilon du + \sigma (\eta_{s-r}).$$
The solution of this equation can be written
\begin{eqnarray*} D_s X_t^\varepsilon &=& \sigma (\eta_{s-r}) \exp \left( \int_s^t b'_{\varepsilon} (X_u^{\varepsilon}) du \right) \\ &=&\sigma (\eta_{s-r}) \exp \left( \int_s^t b' (X_u^{\varepsilon}) du \right)  \exp \left( \frac{1}{\varepsilon}\int_s^t f' (X_u^{\varepsilon}) du \right).\end{eqnarray*} 
Using that 
\begin{equation} \label{xxxx} \be\lc\|DX_t^\varepsilon\|^2_\ch\rc \le C_H \int_0^t \be \left( \vert D_u X_t^\varepsilon \vert^2 \right) du, \end{equation}
since $f'<0$ we get easily that 
$$ \sup_\varepsilon \be\lc\|DX_t^\varepsilon\|^2_\ch\rc \le C_H \int_0^t \sigma^2 (\eta_{u-r}) \exp \left( 2 (t-u) \Vert b' \Vert_\infty \right) du < \infty.$$
Applying Lemma \ref{conve}, we get that for any $t \in [0,r], X_t \in \md^{1,2}$.

\medskip

We continue studying $t \in [0,2r]$. Using again Theorem \ref{d12loc}
we get that each $X_t^\varepsilon$ belongs to   $\md^{1,2}$ and its derivative satisfies, for $t \in [r,2r]$ and for $s < t$
$$ D_s X_t^\varepsilon = \int_s^t b'_{\varepsilon} (X_u^{\varepsilon}) D_s X_u^\varepsilon du + \sigma (X_{s-r}) + \int_s^t \sigma' (X_{u-r}) D_s X_{u-r} dW_u.$$
The solution of this equation can be written
\begin{eqnarray}
 \nonumber D_s X_t^\varepsilon &&=  \left( \sigma (X_{s-r}) + \int_s^t \sigma' (X_{u-r}) D_s X_{u-r} dW_u \right) \\ && \qquad \times\exp \left( \int_s^t b' (X_u^{\varepsilon}) du \Bigg)  \exp \Bigg( \frac{1}{\varepsilon}\int_s^t f' (X_u^{\varepsilon}) du \right). \label{expDXE}
	\end{eqnarray}
Using again (\ref{xxxx}),
since $f'<0$ we get easily that 
\begin{eqnarray*} &&\sup_\varepsilon \be \lc\|DX_t^\varepsilon\|^2_\ch\rc \le 2 C_H \int_0^t  \be  \left( \sigma^2 (X_{s-r}) + \left| \int_s^t \sigma' (X_{u-r}) D_s X_{u-r} dW_u \right|^2 \right)\\ && \hspace{5cm}
\times  \exp \left( 2 (t-u) \Vert b' \Vert_\infty \right) du < \infty.  \end{eqnarray*}
So, $X_t \in \md^{1,2}$.

\medskip

Iterating this procedure $m$ times, we get that 
 $X_t \in \md^{1,2}$ for any $t \in [0,T_0]$. Obviously, it implies that $Y_t \in \md^{1,2}$ for any $t \in [0,T_0]$ and the we can write,
for $s < t$
$$ D_s X_t = \int_s^t b' (X_u) D_s X_u du + \sigma (X_{s-r}) + \int_s^t \sigma' (X_{u-r}) D_s X_{u-r} dW_u + D_sY_t.$$
\end{proof}
\bigskip

\begin{remark}\label{feble}
	Notice that in the previous proof we have seen that
	$$ \sup_\varepsilon \int_0^{t_0} \be \left( \vert D_s X_{t_0}^\varepsilon \vert^2 \right) ds  < \infty.  $$
	So, there will exist a convergent sub-sequence that will converge in the weak topology of $L^2(\Omega;L^2([0,T]))$. Moreover, by uniqueness, the limit will be $DX_{t_0}.$
\end{remark}

Now we can give the proof of the main result of this paper.

\medskip

{\it Proof of Theorem \ref{thprin}.}
	We will prove that for any $a>0$, the restriction on $[a,+\infty)$ of the law of $X_{t_0}$ is absolutely continuous.
	We have seen in Proposition \ref{propd12} that $X_{t_0}$ belongs to $\D^{1,2}$. Then, using the classical result due to Bouleau and Hirsch (see \cite{BH} or \cite[Theorem 2.1.2]{Nu}) it is enough to check that on the set $\Omega_a=\{\omega, X_{t_0} (\omega) \ge a \}$
\begin{eqnarray*}
\int_0^{t_0}  (D_s X_{t_0} )^2 ds >0, \qquad\mbox{ a.s on }\Omega_a.
\end{eqnarray*}
Since
$$\int_0^{t_0}   \vert D_s X_{t_0} \vert ds < t_0^\frac12 \left(\int_0^{t_0}  (D_s X_{t_0} )^2 ds \right)^\frac12
$$
it suffices to check that
$$\int_0^{t_0}   \vert D_s X_{t_0} \vert ds > 0, \qquad \mbox{ a.s on }\Omega_a.$$
On the other hand, since $\sigma>0$, we have that $\sigma(X_{t_0-r})>0$ and we can find $t_1 (\omega) < t_0$ such that for $s\in[t_1,t_0]$
$$
\sigma (X_{s-r}) + \int_{s}^{t_0} \sigma' (X_{u-r}) D_{s} X_{u-r} dW_u > \frac12 \sigma(X_{t_0-r})>0.$$
From expression (\ref{expDXE}), we get that for any $s,\,  t_1(\omega)<s<t_0$ and for all $\varepsilon >0$
it holds that $D_s  X_{t_0}^\varepsilon >0$.

\medskip

Passing to the limit, we obtain that for any $s,\,  t_1(\omega)<s<t_0$  it holds that $D_s  X_{t_0} \ge 0$ and so, it will be enough to prove that
$$\int_{t_1}^{t_0}    D_s X_{t_0}  ds > 0, \qquad \mbox{ a.s on }\Omega_a.$$

\medskip

Fixed $a>0$ set $B_s=\{\omega, X_s(\omega) >\frac{a}{2} \}$ and $\tau_s = \inf \{t \ge s, X_t= \frac{a}{2} \}$. We will assume that our path $\omega$ is in $B_{t_0}.$ Then
\begin{equation}\label{igua0}
\int_{t_1}^{t_0}    D_s X_{t_0}  ds \geq \int_{t_1}^{t_0}    D_s X_{t_0} \1_{\{ \tau_s > t_0 \} }  ds.
\end{equation} 
From Remark \ref{feble} it yields that there exists a sub-sequence that will converge in the weak topology, such that, 
\begin{equation}\label{igua1}
\int_{t_1}^{t_0}    D_s X_{t_0} \1_{\{ \tau_s > t_0 \} }  ds =\lim_{\varepsilon \to 0}
\int_{t_1}^{t_0}    D_s X_{t_0}^\varepsilon  \1_{\{ \tau_s > t_0 \} }  ds,
\end{equation}
in the weak topology of $L^2(\Omega,L^2(0,T)).$
\medskip

Set $F_{s,t}^\varepsilon$ the solution to the integral equation, for $s<t$
$$F_{s,t}^\varepsilon=\sigma (X_{s-r}) + \int_{s}^t \sigma' (X_{u-r}) D_{s} X_{u-r} dW_u  + \int_s^t b'(X_u^\varepsilon) F_{s,u}^\varepsilon du.$$ 
Consider a  $s \in [t_1,t_0]$ with $\tau_s > t_0$. For all $u \in [s,t_0]$ it holds that $X_u \ge \frac{a}{2}.$

By the uniform convergence of $X^\varepsilon$ to $X$, there exists a  $\varepsilon_0(\omega)$ such that for all $\varepsilon < \varepsilon_0$ it holds that $X_u^\varepsilon > \frac{a}{4}$ for any $u \in [s,t_0]$. So, for all $\varepsilon < \varepsilon_0$ it holds that $f'(X_u^\varepsilon)=0$ for any $u \in [s,t_0]$. Then, for any $\varepsilon < \varepsilon_0(\omega)$ and for all $u \in [s,t_0]$ we have by uniqueness of the solution that
\begin{equation}\label{igua2}
F_{s,t_0}^\varepsilon= D_s X_{t_0}^\varepsilon.
\end{equation}

\medskip

For $s<t$, let us introduce
$$G_{s,t}=\sigma (X_{s-r}) + \int_{s}^t \sigma' (X_{u-r}) D_{s} X_{u-r} dW_u  + \int_s^t b'(X_u) G_{s,u} du.$$ 
So,
$$G_{s,t}= \left(\sigma (X_{s-r}) + \int_{s}^t \sigma' (X_{u-r}) D_{s} X_{u-r} dW_u \right) \exp\left( \int_s^t b'(X_u) du\right),$$
and it yields that $G_{s,t_0} >0$ for any $s \in [t_1,t_0]$.

\medskip

Notice that 
\begin{eqnarray*}
G_{s,t}- F_{s,t}^\varepsilon &= &\int_s^{t} b'(X_u) G_{s,u} du - \int_s^t b'(X_u^\varepsilon) F_{s,u}^\varepsilon du\\
 &= &\int_s^t( b'(X_u)-b'(X_u^\varepsilon)) G_{s,u} du + \int_s^t b'(X_u^\varepsilon)(G_{s,u}- F_{s,u}^\varepsilon) du
\end{eqnarray*}
Then,
$$G_{s,t_0} - F_{s,t_0}^\varepsilon = \left(\int_s^{t_0}( b'(X_u)-b'(X_u^\varepsilon)) G_{s,u} du \right) \exp \left( \int_s^{t_0} b'(X_u^\varepsilon)   du \right).$$
Using that $b'$ is differentiable with bounded derivative we have that
for $s \in [t_1,t_0]$
$$\vert G_{s,t_0} - F_{s,t_0}^\varepsilon \vert \le K \Vert X - X^\varepsilon \Vert_\infty  \left(\int_s^{t_0} \vert G_{s,u} \vert du \right).$$
and so $ F_{s,t_0}^\varepsilon$ converges a.s. to $G_{s,t_0}$.

\medskip

Using a bounded convergence result we get 
\begin{equation}\label{igua3}
\lim_{\varepsilon}
\int_{t_1}^{t_0}    F_{s,t_0}^\varepsilon  \1_{\{ \tau_s > t_0 \} }  ds= \int_{t_1}^{t_0}    G_{s,t_0} \1_{\{ \tau_s > t_0 \} }  ds>0, \qquad {\rm a.s.}
\end{equation}
Putting together (\ref{igua0}), (\ref{igua1}), (\ref{igua2}) and (\ref{igua3}) we obtain
$$\int_{0}^{t_0}    \left|D_s X_{t_0}\right|   ds= \int_{0}^{t_1}    \left|D_s X_{t_0}\right|   ds+\int_{t_1}^{t_0}    \left|D_s X_{t_0}\right|   ds
\ge \int_{t_1}^{t_0}    \left|D_s X_{t_0}\right|   ds\ge \int_{t_1}^{t_0}    D_s X_{t_0}   ds
>0.$$

\hfill$\square$

\end{document}